\newtheorem{teo}{Theorem}[section]
\newtheorem{prop}[teo]{Proposition}
\newtheorem{lem}[teo]{Lemma}
\newtheorem{coro}[teo]{Corollary}
\newtheorem{rem}[teo]{Remark}
\begin{document} 
\title{\vspace*{0cm}The restricted Lagrangian Grassmannian in infinite dimension}

\date{}
\author{Manuel L\'opez Galv\'an\footnote{Supported by Instituto Argentino de Matem\'atica (CONICET-PIP 2010-0757), Universidad Nacional de General Sarmiento and ANPCyT (PICT 2010-2478).}}

\maketitle
\setlength{\parindent}{0cm} 

\begin{abstract} In this paper we study the action of the symplectic operators which are a perturbation of the identity by a Hilbert-Schmidt operator in the Lagrangian Grassmannian manifold.
\end{abstract}

\section{Introduction}
In finite dimension, the Lagrangian Grassmannian $\Lambda(n)$ of the Hilbert space $\mathcal{H}=\mathbb{R}^{n}\times\mathbb{R}^{n}$ with the canonical complex structure $J(x,y)=(-y,x)$ was introduced by V.I. Arnold in 1967 \cite{Arnold}. These notions have been generalized to infinite dimensional Hilbert spaces (see \cite{Furutani}) and have found several applications to Algebraic Topology, Differential Geometry and Physics.

In classical finite dimensional Riemannian theory it is well-known the fact that given two points there is a minimal geodesic curve that joins them and this is equivalent to the completeness of the metric space with the geodesic distance; this is the Hopf-Rinow theorem. In the infinite dimensional case this is no longer true. In \cite{McAlpin} and \cite{Atkin}, McAlpin and Atkin showed in two examples how this theorem can fail. 

In \cite{Andruchow1} E. Andruchow and G. Larotonda introduced a linear connection in the Lagrangian Grassmannian and focused on the geodesic structure of this manifold. There they proved that any two Lagrangian subspaces can be joined by a minimal geodesic.   

In this paper we study a restricted version of the Lagrangian Grassmannian given by the action of the restricted symplectic group. We will focus on the geometric study and we will discuss which metric can be defined in each tangent space and which geometric properties it verifies.

\section{Background and definitions}
In this paper we will follow the notation and definitions of \cite{Manuel}, so first we recall some of this facts. Let $\mathcal{H}$ be an infinite dimensional real Hilbert space and let $\mathcal{B}(\mathcal{H})$ be the space of bounded operators. Denote by $\mathcal{B}_2(\mathcal{H})$ the Hilbert-Schmidt class $$\mathcal{B}_2(\mathcal{H})=\left\{a \in \mathcal{B}(\mathcal{H}): Tr(a^*a)< \infty\right\}$$ where $Tr$ is the usual trace in $\mathcal{B}(\mathcal{H})$. This space is a Hilbert space with the inner product $$<a,b>=Tr(b^*a).$$ The norm induced by this inner product is called the 2-norm and denoted by $$\Vert a \Vert_2=Tr(a^*a)^{1/2},$$ the usual operator norm will be denoted by $\Vert \ \Vert$. 

If $\mathcal{A}\subset\mathcal{B}(\mathcal{H})$ is any subset of operators we use the subscript $s$ (resp $as$) to denote the subset of symmetric (resp. anti-symmetric) operators of it, i.e. $\mathcal{A}_s=\left\{ x \in \mathcal{A}: x^*=x\right\}$ and $\mathcal{A}_{as}=\left\{ x \in \mathcal{A}: x^*=-x\right\}$. 

We fix a complex structure; that is a linear isometry $J \in \mathcal{B}(\mathcal{H})$ such that, $$J^2=-1  \ \mbox{and} \ J^*=-J.$$
The symplectic form $w$ is given by $w(\xi,\eta)=\left\langle J\xi,\eta \right\rangle$. We denote by $GL(\mathcal{H})$ the group of invertible operators and by ${\rm Sp}(\mathcal{H})$ the subgroup of invertible operators which preserve the symplectic form, that is $g \in {\rm Sp}(\mathcal{H})$ if $w(g\xi,g\eta)=w(\xi,\eta)$. Algebraically $${\rm Sp}(\mathcal{H})=\left\{ g \in GL(\mathcal{H}): g^*Jg=J\right\}.$$ This group is a Banach-Lie group (see \cite{Manuel}) and its Banach-Lie algebra is given by $$\mathfrak{sp}(\mathcal{H})=\left\{x \in \mathcal{B}(\mathcal{H}): xJ=-Jx^*\right\}.$$ Denote by $\mathcal{H}_J$ the Hilbert space $\mathcal{H}$ with the action of the complex field $\mathbb{C}$ given by $J$, that is; if $\lambda=\lambda_1+i\lambda_2 \in \mathbb{C}$ and $\xi \in \mathcal{H}$ we can define the action as $\lambda\xi:=\lambda_1\xi+\lambda_2J\xi$ and the complex inner product as $<\xi,\eta>_{\mathbb{C}}=<\xi,\eta>-iw(\xi,\eta)$. 

Let $\mathcal{B}(\mathcal{H}_J)$ be the space of bounded complex linear operators in $\mathcal{H}_J$. A straightforward computation shows that $\mathcal{B}(\mathcal{H}_J)$ consists of the elements of $\mathcal{B}(\mathcal{H})$ which commute with $J$.  

Following the notation of \cite{Manuel}, we consider the restricted subgroup of ${\rm Sp}(\mathcal{H})$ $${\rm Sp}_2(\mathcal{H})=\left\{ g \in {\rm Sp}(\mathcal{H}): g-1 \in \mathcal{B}_2(\mathcal{H})\right\}.$$
There it was proved that this group has a differentiable structure modelled on $\mathcal{B}_2(\mathcal{H})$. Some of the this facts are have been well-known for general Schatten ideals, more precisely the Banach-Lie group structure was noted in the book \cite{Harpe}.
The Lie algebra of ${\rm Sp}_2(\mathcal{H})$ is  $$\mathfrak{sp}_2(\mathcal{H})=\left\{x \in \mathcal{B}_2(\mathcal{H}): xJ=-Jx^*\right\}.$$ 

The Lagrangian Grassmannian $\Lambda(\mathcal{H})$ is the set of closed linear subspaces $L\subset \mathcal{H}$ such that $J(L)=L^{\perp}$. Clearly ${\rm Sp}(\mathcal{H})$ acts on $\Lambda(\mathcal{H})$ by means of $g.L=g(L)$. Since the action of the unitary group $U(\mathcal{H}_J)$ is transitive, it is clear that the action of ${\rm Sp}(\mathcal{H})$ is also transitive, so we can think of $\Lambda(\mathcal{H})$ as an orbit for a fixed $L_0\in \Lambda(\mathcal{H})$, i.e $$\Lambda(\mathcal{H})=\lbrace g(L_0) : g \in {\rm Sp}(\mathcal{H}) \rbrace.$$
We denote by $P_L \in \mathcal{B}(\mathcal{H})$ the orthogonal projection onto $L$. It is customary to parametrize closed subspaces via orthogonal projections, $L \leftrightarrow P_L$, in order to carry on geometric or analytic computations. We shall also consider here an alternative description of the Lagrangian subspaces using projections and symmetries. That is, $L$ is a Lagrangian subspace if and only if $P_LJ+JP_L=J$, see \cite{Furutani} for a proof. Another description of this equation using symmetries is $\epsilon_LJ=-J\epsilon_L$, where $\epsilon_L=2P_L-1$ is the symmetric orthogonal transformation which acts as the identity in $L$ and minus the identity in $L^{\perp}$. 

The isotropy subgroup at $L$ is $${{\rm Sp}(\mathcal{H})}_{L}=\lbrace g \in  {\rm Sp}(\mathcal{H}): g(L)=L\rbrace.$$ It is obvious that this subgroup is  a closed subgroup of ${{\rm Sp}(\mathcal{H})}$. In the infinite dimensional setting, this does not guarantee a nice submanifold structure; in Proposition \ref{sublie} we will prove that ${{\rm Sp}(\mathcal{H})}_{L}$ is a Banach-Lie subgroup of ${\rm Sp}(\mathcal{H})$.

We can restrict the natural action of the symplectic group in $\Lambda(\mathcal{H})$ to the restricted symplectic group and it will also be smooth. As before, we can consider the isotropy group at $L$ $${{\rm Sp}_2(\mathcal{H})}_{L}=\lbrace g \in {\rm Sp}_2(\mathcal{H}) : g(L)=L\rbrace.$$ We will also prove in Proposition \ref{sublie} that this subgroup is a Banach-Lie subgroup of ${{\rm Sp}_2(\mathcal{H})}$, with the topology induced by the metric $\Vert g_1-g_2 \Vert_2$ .

If $T$ is any operator we denote by ${Gr}_T$ its graph, i.e. the subset ${Gr}_T=\lbrace v+Tv : v \in \mbox{Dom}(T)\rbrace\subset \mathcal{H}\oplus\mathcal{H}$. Fix a Lagrangian subspace $L_0\subset \mathcal{H}$, we consider the subset of $\Lambda(\mathcal{H})$ $$\mathcal{O}_{L_0}=\lbrace g(L_0) : g \in {\rm Sp}_2(\mathcal{H})\rbrace\subseteq \Lambda(\mathcal{H}).$$ In Section 3 we will see that this set is strictly contained in $\Lambda(\mathcal{H})$ and thus the action of ${\rm Sp}_2(\mathcal{H})$ in the Lagrangian Grassmannian is not transitive. 
The purpose of this paper is the geometric study of this orbit; its manifold structure and relevant metrics.

\section{Manifold structure of $\mathcal{O}_{L_0}$}
We start proving that the subset $\mathcal{O}_{L_0}$ is strictly contained in $\Lambda(\mathcal{H})$, to do it we need the following lemma.

\begin{lem} If we identify the closed subspace $g(L_0)$ with its orthogonal projection $P_{g(L_0)}$ then it belongs to the affine space $P_{L_0}+\mathcal{B}_2(\mathcal{H})$. 
\end{lem}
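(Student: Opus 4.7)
The goal is to show that $P_{g(L_0)} - P_{L_0}\in\mathcal{B}_2(\mathcal{H})$ whenever $g\in{\rm Sp}_2(\mathcal{H})$. My plan is to write down a concrete formula for $P_{g(L_0)}$ in terms of $g$ and $P_{L_0}$, and then exploit the fact that $g-1\in\mathcal{B}_2$ together with the ideal property of $\mathcal{B}_2(\mathcal{H})$.

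The starting observation is that $g(L_0)$ is Lagrangian (because $g$ is symplectic) and its orthogonal complement is easily identified: for any $v\in\mathcal{H}$, $v\perp g(L_0)$ iff $g^*v\in L_0^{\perp}$, so $(g(L_0))^{\perp}=g^{-*}(L_0^{\perp})$, where $g^{-*}:=(g^{-1})^*$. Thus the pair $(g,g^{-*})$ gives a direct-sum decomposition $\mathcal{H}=g(L_0)\oplus g^{-*}(L_0^{\perp})$. I would then introduce
\[
M:=gP_{L_0}+g^{-*}P_{L_0^{\perp}},
\]
check that $M$ is a bounded bijection (injectivity follows because $gP_{L_0}w\in g(L_0)$ and $g^{-*}P_{L_0^{\perp}}w\in g(L_0)^{\perp}$ intersect only at $0$, and surjectivity from $\mathcal{H}=g(L_0)+g(L_0)^{\perp}$), and then verify the identity
\[
P_{g(L_0)}=gP_{L_0}M^{-1}
\]
by testing on $gu$ ($u\in L_0$) and $g^{-*}u'$ ($u'\in L_0^{\perp}$): in the first case $M(u)=gu$ and the formula returns $gu$; in the second case $M(u')=g^{-*}u'$ and the formula returns $gP_{L_0}u'=0$.

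With the formula in hand, the key manipulation is
\[
P_{g(L_0)}-P_{L_0}=gP_{L_0}M^{-1}-P_{L_0}=\bigl(gP_{L_0}-P_{L_0}M\bigr)M^{-1}
=\bigl(P_{L_0^{\perp}}gP_{L_0}-P_{L_0}g^{-*}P_{L_0^{\perp}}\bigr)M^{-1},
\]
after expanding $M$ and using $P_{L_0}+P_{L_0^{\perp}}=1$. Now, since $g-1\in\mathcal{B}_2(\mathcal{H})$ and $g^{-*}-1=-g^{-*}(g^*-1)g^{*-1}\cdot g^*\in\mathcal{B}_2(\mathcal{H})$ (using that $\mathcal{B}_2$ is a two-sided ideal and $g^*$ is bounded and invertible), the identities $P_{L_0^{\perp}}P_{L_0}=0$ and $P_{L_0}P_{L_0^{\perp}}=0$ give
\[
P_{L_0^{\perp}}gP_{L_0}=P_{L_0^{\perp}}(g-1)P_{L_0}\in\mathcal{B}_2(\mathcal{H}),\qquad
P_{L_0}g^{-*}P_{L_0^{\perp}}=P_{L_0}(g^{-*}-1)P_{L_0^{\perp}}\in\mathcal{B}_2(\mathcal{H}).
\]
Multiplying by the bounded operator $M^{-1}$ preserves $\mathcal{B}_2$, which concludes the proof.

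The main hurdle I anticipate is the very first step: singling out the right formula for $P_{g(L_0)}$. The naive conjugation $gP_{L_0}g^{-1}$ is an oblique projection, not the orthogonal one, precisely because $g$ is only symplectic (not unitary). The fix is to use $g$ on $L_0$ but $g^{-*}$ on $L_0^{\perp}$, which is exactly what the symplectic identity $(g^*)^{-1}(L_0^{\perp})=(g(L_0))^{\perp}$ forces; once this is recognized, the rest is algebraic.
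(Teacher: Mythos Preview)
Your argument is correct and is a genuinely different route from the paper's. The paper works with the oblique idempotent $Q=gP_{L_0}g^{-1}$ and the general formula $P_{R(Q)}=QQ^{*}\bigl(1-(Q-Q^{*})^{2}\bigr)^{-1/2}$; it then shows $QQ^{*}\in P_{L_0}+\mathcal{B}_2$ by direct expansion, and uses the spectral theorem plus an $\ell^{1}/\ell^{2}$ estimate on eigenvalues to prove that $\bigl(1-(Q-Q^{*})^{2}\bigr)^{-1/2}\in 1+\mathcal{B}_2$. Your approach instead produces a \emph{closed} expression $P_{g(L_0)}=gP_{L_0}M^{-1}$ with $M=gP_{L_0}+g^{-*}P_{L_0^{\perp}}$, obtained from the observation $(g(L_0))^{\perp}=g^{-*}(L_0^{\perp})$, and reduces the whole question to the identity
\[
P_{g(L_0)}-P_{L_0}=\bigl(P_{L_0^{\perp}}(g-1)P_{L_0}-P_{L_0}(g^{-*}-1)P_{L_0^{\perp}}\bigr)M^{-1},
\]
which is visibly Hilbert--Schmidt by the ideal property. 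This is more elementary: it avoids the functional calculus entirely and requires no spectral analysis of $(Q-Q^{*})^{2}$. The paper's approach, on the other hand, uses a formula valid for \emph{any} idempotent $Q$, not just one coming from a symplectic $g$; your trick $g^{-*}(L_0^{\perp})=(g(L_0))^{\perp}$ is specific to the situation (though only the fact that $g$ is invertible is really used, so it generalizes too). One cosmetic point: your line $g^{-*}-1=-g^{-*}(g^{*}-1)g^{*-1}\cdot g^{*}$ is correct but needlessly roundabout; simply $g^{-*}-1=-g^{-*}(g^{*}-1)\in\mathcal{B}_2$ suffices.
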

\begin{proof} To prove it, we use the formula of the orthogonal projector over the range of an operator $Q$ given by 
\begin{equation} \label{proyort}
P_{R(Q)}=QQ^{*}{(1-(Q-Q^{*})^{2})}^{1/2}.
\end{equation}
This formula can be obtained using a block matrix representation. If we denote by $Q$ the idempotent associated of $g(L_0)$, i.e. $Q:=gP_{L_0}g^{-1}$ and if we suppose that $g=1+k$ and $g^{-1}=1+k'$ where $k,k'\in \mathcal{B}_2(\mathcal{H})$ we have $$QQ^{*}=(1+k)P_{L_0}(1+k')(1+k'^{*})P_{L_0}(1+k^{*})$$
\begin{align*}
&=\underbrace{(P_{L_0}+P_{L_0}k'+kP_{L_0}+kP_{L_0}k')}_Q \underbrace{(P_{L_0}+P_{L_0}k^{*}+k'^{*}P_{L_0}+k'^{*}P_{L_0}k^{*})}_{Q^{*}} \\
&=P_{L_0}+ \underbrace{P_{L_0}k^{*}+P_{L_0}k'^{*}P_{L_0}+.....}_{\in \ \mathcal{B}_2(\mathcal{H})}=P_{L_0}+T \  \in P_{L_0}+\mathcal{B}_2(\mathcal{H}). \\
\end{align*}     
It is clear that $Q-Q^{*} \in \mathcal{B}_2(\mathcal{H})$, then $(Q-Q^{*})^{2} \in \mathcal{B}_1(\mathcal{H})$. From the spectral theorem we have,  
$$1-(Q-Q^{*})^{2}=1+\sum_i \lambda_iP_i=P_0+\sum_i(\lambda_i+1)P_i$$ where $(\lambda_i) \in \ell^{1}$ and $P_0$ is the projection to the kernel. Taking square root, we have $${(1-(Q-Q^{*})^{2})}^{1/2}=P_0+\sum_i(\lambda_i+1)^{1/2}P_i$$ \begin{align*}
&=P_0+\sum_i[(\lambda_i+1)^{1/2}-1]P_i +\sum_i 1P_i \\
&=1+\sum_i[(\lambda_i+1)^{1/2}-1]P_i=1+T'  \  \in \ 1+ \mathcal{B}_2(\mathcal{H}) \\
\end{align*}
where $((\lambda_i+1)^{1/2}-1) \in \ell^{2}$, because $(\lambda_i) \in \ell^{1}$ and $\lim_{x\rightarrow 0}\dfrac{((x+1)^{1/2}-1)^{2}}{x}=0$. Then by the formula (\ref{proyort}) we have $$P_{g(L_0)}=(P_{L_0}+T)(1+T') \in P_{L_0}+\mathcal{B}_2(\mathcal{H}).$$
\end{proof} 

\begin{coro} The inclusion $\mathcal{O}_{L_0} \subset \Lambda(\mathcal{H})$ is strict.
\end{coro}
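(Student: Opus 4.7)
The plan is to exhibit an explicit Lagrangian subspace $L$ that cannot lie in $\mathcal{O}_{L_0}$, using the previous lemma as the sole obstruction. Since the lemma tells us that every element of $\mathcal{O}_{L_0}$ has $P_L - P_{L_0}\in \mathcal{B}_2(\mathcal{H})$, it is enough to produce an $L\in\Lambda(\mathcal{H})$ for which this difference fails to be Hilbert--Schmidt.

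The natural candidate is $L=L_0^{\perp}$. First I would verify that $L_0^\perp$ is itself Lagrangian: from $J(L_0)=L_0^\perp$ it follows that $J(L_0^\perp)=J^2(L_0)=-L_0=L_0=(L_0^\perp)^\perp$, so $L_0^\perp \in \Lambda(\mathcal{H})$. Then I would compute $P_{L_0^\perp}-P_{L_0}=(1-P_{L_0})-P_{L_0}=1-2P_{L_0}$.

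The final step is to observe that $1-2P_{L_0}$ is the orthogonal symmetry $\epsilon_{L_0}$, which has norm one and infinite rank. Indeed, $J$ restricts to an isometric isomorphism $L_0\to L_0^\perp$, so both $L_0$ and $L_0^\perp$ are infinite-dimensional, and consequently $1-2P_{L_0}$ is not compact, hence certainly not in $\mathcal{B}_2(\mathcal{H})$. By the lemma this prevents $L_0^\perp$ from belonging to $\mathcal{O}_{L_0}$, and the strict inclusion follows.

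There is essentially no technical obstacle here; the only thing to notice is the choice of example. One could equally well take $L=U(L_0)$ for a unitary $U\in U(\mathcal{H}_J)$ which is not a Hilbert--Schmidt perturbation of the identity, but $L_0^\perp$ is the cleanest and most symmetric witness.
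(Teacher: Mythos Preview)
Your proof is correct and rests on the same obstruction as the paper (the lemma that $P_L-P_{L_0}\in\mathcal{B}_2(\mathcal{H})$ for every $L\in\mathcal{O}_{L_0}$), but your choice of witness is different and, in fact, cleaner. The paper works in the concrete model $\mathcal{H}=\mathcal{K}\times\mathcal{K}$, $J(\xi,\eta)=(-\eta,\xi)$, with $L_0=\{0\}\times\mathcal{K}$, and takes as counterexample the graph $Gr_I$ of the identity of $\mathcal{K}$; writing the two projections as $2\times 2$ block matrices one sees that the $(1,1)$ block of $P_{Gr_I}-P_{L_0}$ equals $-\tfrac12\cdot 1$, which is not Hilbert--Schmidt on the infinite-dimensional $L_0$. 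Your argument stays abstract: for an arbitrary $L_0$ you take $L=L_0^{\perp}$, check directly that it is Lagrangian, and observe that $P_{L_0^{\perp}}-P_{L_0}=1-2P_{L_0}$ is (up to sign) the symmetry $\epsilon_{L_0}$, which is non-compact because both $L_0$ and $L_0^{\perp}$ are infinite-dimensional. This avoids both the reduction to a specific symplectic model and the block-matrix computation, at no cost. One cosmetic remark: in the paper's convention $\epsilon_L=2P_L-1$, so $1-2P_{L_0}=-\epsilon_{L_0}$ rather than $\epsilon_{L_0}$; the sign is of course irrelevant to the conclusion.
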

\begin{proof} We will see that in the generic example. Let $\mathcal{K}$ be a Hilbert space and $\mathcal{H}=\mathcal{K}\times \mathcal{K}$ with the usual inner product. Let $J : \mathcal{H} \rightarrow \mathcal{H}$ given by $J(\xi,\eta)=(-\eta,\xi)$. We can take $L_0=\lbrace 0 \rbrace \times \mathcal{K}$ and ${Gr}_I$ the graph of the identity map of $\mathcal{K}$; this subspaces are Lagrangian with respect to the form $J$. If we write the orthogonal projector over this subspaces in terms of the decomposition, $L_0\oplus L_0^{\perp}$ we have $$ P_{L_0}=\begin{pmatrix}
  1 & 0 \\
  0 & 0 \\
\end{pmatrix} 
,\  P_{{Gr}_I}=\dfrac{1}{2}\begin{pmatrix}
  1 & 1 \\
  1 & 1 \\
\end{pmatrix}.$$ 
Suppose that  $\Lambda(\mathcal{H})=\mathcal{O}_{L_0}$, then ${Gr}_I$ belongs to $\mathcal{O}_{L_0}$ and by the above Lemma its orthogonal projector can be written as an element of $P_{L_0}+\mathcal{B}_2(\mathcal{H})$, that is $P_{Gr_I}-P_{L_0}=T \in \mathcal{B}_2(\mathcal{H})$ and if we write it in terms of matrix blocks we have;
$$\dfrac{1}{2}\begin{pmatrix}
  1 & 1 \\
  1 & 1 \\
\end{pmatrix} -\begin{pmatrix}
  1 & 0 \\
  0 & 0 \\
\end{pmatrix} =\begin{pmatrix}
  P_{L_0}TP_{L_0} &  (1-P_{L_0})TP_{L_0}\\
  P_{L_0}T(1-P_{L_0}) & (1-P_{L_0})T(1-P_{L_0}) \\  
\end{pmatrix} \in \mathcal{B}_2(\mathcal{H})$$ and this is a contradiction because $-1/2=P_{L_0}TP_{L_0} \in \mathcal{B}_2(L_0)$ and $-1/2 \notin \mathcal{B}_2(L_0)$.
\end{proof}
To build a manifold structure over $\mathcal{O}_{L_0}$, we will considerate the charts of $\Lambda(\mathcal{H})$ given by the parametrization of Lagrangian subspaces as graphs of functions and we will adapt this charts to our set. This charts were used in \cite{Piccione1} to describe the manifold structure of $\Lambda(\mathcal{H})$; in the followings steps we recall this charts and we fix the notation.\\ Given $L \in \Lambda(\mathcal{H})$, we have the Lagrangian decomposition $\mathcal{H}=L\oplus L^{\perp}$ and we denote by $$\Omega({L}^{\perp})=\lbrace W \in \Lambda(\mathcal{H}) :  \mathcal{H}=W\oplus {L}^{\perp} \rbrace.$$ In \cite{Furutani} it was proved that these sets are open in $\Lambda(\mathcal{H})$.   
We consider the map  $\phi_{L} : \Omega({L}^{\perp}) \rightarrow \mathcal{B}(L)_s$ given by $$W={Gr}_T \longmapsto J\vert_{{L}^{\perp}}T$$ 
where $T: L \rightarrow {L}^{\perp}$ is the linear operator whose graph is $W$, more precisely $$T=\pi_1\vert_W \circ (\pi_0\vert_W)^{-1}$$ where $\pi_0$, $\pi_1$ are the orthogonal projections to $L$ and ${L}^{\perp}$.  
\begin{rem}\label{carsobre} The map  $\phi_{L}$ is onto.
\end{rem}
\begin{proof} Let $\psi \in \mathcal{B}(L)_s$, we consider the operator $T:= -J\vert_{L}\psi$ ($T$ maps $L$ into $L^{\perp}$) and $W:={Gr}_T$. Since $\psi$ is a symmetric operator, $W$ is a Lagrangian subspace and $\mathcal{H}={Gr}_T\oplus {L}^{\perp}$; for this $W \in \Omega({L}^{\perp})$ and it is a preimage of $\psi$. 
\end{proof}  

The maps $\lbrace\phi_{L}\rbrace_{L\in \Lambda(\mathcal{H})}$ constitute a smooth atlas for $\Lambda(\mathcal{H})$, so that $\Lambda(\mathcal{H})$ becomes a smooth Banach manifold (see \cite{Piccione2}). For every $W \in \Lambda(\mathcal{H})$ we can identify the tangent space $T_W\Lambda(\mathcal{H})$ with the Banach space $\mathcal{B}(W)_s$, this identification was used in \cite{Piccione1} and \cite{Piccione2}. For $W \in \Omega({L}^{\perp})$, the differential $d \phi_{L}$ of the chart  at $W$ is given by 
\begin{equation}\label{difdecartas}
d_W\phi_{L}(H)=\eta^{*}H\eta
\end{equation}  for  all $H \in \mathcal{B}(W)_s$, where $\eta : L \rightarrow W$ is the isomorphism given by the restriction to $L$ of the projection $W\oplus L^{\perp} \rightarrow W$. It is easy to see that the inverse $d_\psi\phi_L^{-1}$ of this map at a point $\psi =\phi_L (W)$ is given by  
$$\mathcal{B}(L)_s \stackrel{d_\psi\phi_L^{-1}} \longrightarrow \mathcal{B}(W)_s$$   
$$H \longmapsto {(\eta^{-1})}^*H\eta^{-1}.$$ 

Since the symplectic group acts smoothly we can consider for fixed $L \in \Lambda(\mathcal{H})$ the smooth map $\pi_{L} : {\rm Sp}(\mathcal{H}) \rightarrow \Lambda(\mathcal{H})$ given by $g \mapsto g(L)$. Its differential map at a point $g\in {\rm Sp}(\mathcal{H})$ is given by $$T_g{\rm Sp}(\mathcal{H})=\mathfrak{sp}(\mathcal{H})g \ni Xg\mapsto P_{g(L)}JX\vert_{g(L)} \in \mathcal{B}(g(L))_s,$$ see \cite{Piccione1} and \cite{Piccione2} for a proof.  
If $L\in \mathcal{O}_{L_0}$ we can restrict the map $\pi_{L}$ to the subgroup ${\rm Sp}_2(\mathcal{H})$ obtaining a surjective map onto $\mathcal{O}_{L_0}$,
$$\pi_{L}\vert_{{\rm Sp}_2(\mathcal{H})} : {\rm Sp}_2(\mathcal{H}) \rightarrow \mathcal{O}_{L_0}.$$

\begin{teo} \label{cartasOL_0}The set $\mathcal{O}_{L_0}$ is a submanifold of $\Lambda(\mathcal{H})$ and the natural map $i :\mathcal{O}_{L_0} \hookrightarrow \Lambda(\mathcal{H})$ is an immersion.
\end{teo}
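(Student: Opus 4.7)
The approach is to equip $\mathcal{O}_{L_0}$ with charts inherited from the ambient atlas $\{\phi_L\}$ of $\Lambda(\mathcal{H})$. Concretely, for each $L\in\mathcal{O}_{L_0}$ I plan to show that the Arnold chart $\phi_L:\Omega(L^\perp)\to\mathcal{B}(L)_s$ restricts to a bijection
$$\phi_L:\Omega(L^\perp)\cap\mathcal{O}_{L_0}\longrightarrow\mathcal{B}_2(L)_s,$$
where $\mathcal{B}_2(L)_s$ denotes the Hilbert--Schmidt symmetric operators on $L$ endowed with its own Banach (in fact Hilbert) space norm. This furnishes $\mathcal{O}_{L_0}$ with a smooth manifold structure modelled on $\mathcal{B}_2(L)_s$, and makes the inclusion $i$ read in coordinates as the continuous linear injection $\mathcal{B}_2(L)_s\hookrightarrow\mathcal{B}(L)_s$, so that $d_Li$ is injective at every point, i.e.\ $i$ is an immersion.

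The central claim is therefore: for $L\in\mathcal{O}_{L_0}$, a Lagrangian $W=Gr_T\in\Omega(L^\perp)$ lies in $\mathcal{O}_{L_0}$ if and only if $T\in\mathcal{B}_2(L,L^\perp)$. For the forward direction, the preceding lemma gives $P_W-P_{L_0}\in\mathcal{B}_2(\mathcal{H})$ and $P_L-P_{L_0}\in\mathcal{B}_2(\mathcal{H})$, so by subtraction $P_W-P_L\in\mathcal{B}_2(\mathcal{H})$. Writing $P_W$ in block form with respect to the decomposition $L\oplus L^\perp$,
$$P_W=\begin{pmatrix}(1+T^*T)^{-1} & (1+T^*T)^{-1}T^*\\T(1+T^*T)^{-1} & T(1+T^*T)^{-1}T^*\end{pmatrix},$$
the off-diagonal block $T(1+T^*T)^{-1}$ is Hilbert--Schmidt; right-multiplying by the bounded operator $(1+T^*T)$ yields $T\in\mathcal{B}_2(L,L^\perp)$, whence $\phi_L(W)=J|_{L^\perp}T\in\mathcal{B}_2(L)_s$.

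For the reverse direction, given $\psi\in\mathcal{B}_2(L)_s$ I set $T=-J|_L\psi\in\mathcal{B}_2(L,L^\perp)$ (whose graph is Lagrangian by Remark~\ref{carsobre}) and must exhibit $g\in{\rm Sp}_2(\mathcal{H})$ with $g(L)=Gr_T$. I propose the explicit block formula
$$g=\begin{pmatrix}(1+T^*T)^{-1/2} & -T^*(1+TT^*)^{-1/2}\\T(1+T^*T)^{-1/2} & (1+TT^*)^{-1/2}\end{pmatrix},$$
and verify three things: that $g(L)=Gr_T$ (direct), that $g^*Jg=J$ (using the Lagrangian relation $J|_{L^\perp}T=T^*(J|_{L^\perp})^*$ inherited from symmetry of $\psi$), and that $g-1\in\mathcal{B}_2(\mathcal{H})$. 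The last point uses the same spectral-calculus argument as the preceding lemma: since $T^*T,TT^*\in\mathcal{B}_1(\mathcal{H})$, the diagonal blocks differ from the identity by $\mathcal{B}_1\subset\mathcal{B}_2$ operators, while the off-diagonal blocks are products of a $\mathcal{B}_2$ operator with a bounded one.

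With the claim in hand, the transition maps between two such restricted charts are the restrictions of the already-smooth transition maps of $\Lambda(\mathcal{H})$; their smoothness as maps between the $\mathcal{B}_2$-models is routine from formula (\ref{difdecartas}), since $H\mapsto(\eta^{-1})^*H\eta^{-1}$ obviously preserves the Hilbert--Schmidt ideal. The main obstacle I foresee is the reverse direction: checking symplecticity of the proposed $g$ and its membership in ${\rm Sp}_2(\mathcal{H})$ requires a careful use of the Lagrangian condition on $T$ together with the $\mathcal{B}_1$-functional calculus on $T^*T$, essentially repackaging the computations already done in the preceding lemma in the other direction.
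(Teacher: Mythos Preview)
Your proposal is correct and follows the same overall strategy as the paper: restrict the Arnold charts $\phi_L$ to $\Omega(L^\perp)\cap\mathcal{O}_{L_0}$ and prove that the image is exactly $\mathcal{B}_2(L)_s$. The execution, however, differs in both directions.

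For the forward direction you invoke the preceding lemma to get $P_W-P_L\in\mathcal{B}_2(\mathcal{H})$ and then read off that $T(1+T^*T)^{-1}$ is Hilbert--Schmidt from the block form of $P_W$. The paper instead writes $W=\tilde g(L)$ with $\tilde g=1+k$, $k\in\mathcal{B}_2(\mathcal{H})$, and observes directly that $\pi_1|_W=\pi_1\circ k\circ\tilde g^{-1}|_W\in\mathcal{B}_2(W,L^\perp)$, hence $T=\pi_1|_W\circ(\pi_0|_W)^{-1}$ is Hilbert--Schmidt. Both arguments are short; yours recycles the lemma, the paper's is self-contained.

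For the reverse direction the difference is more substantial. You construct the \emph{unitary} rotation
\[
g=\begin{pmatrix}(1+T^*T)^{-1/2} & -T^*(1+TT^*)^{-1/2}\\ T(1+T^*T)^{-1/2} & (1+TT^*)^{-1/2}\end{pmatrix},
\]
which does land in $U(\mathcal{H}_J)\cap(1+\mathcal{B}_2(\mathcal{H}))\subset{\rm Sp}_2(\mathcal{H})$, but verifying $gJ=Jg$ requires the intertwining relations $J|_L(1+T^*T)^{-1/2}=(1+TT^*)^{-1/2}J|_L$ and $J|_{L^\perp}T=-T^*J|_L$, and verifying $g-1\in\mathcal{B}_2(\mathcal{H})$ needs the functional-calculus argument $(1+T^*T)^{-1/2}-1\in\mathcal{B}_2$ from the preceding lemma. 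The paper instead takes the \emph{shear}
\[
f=1-J|_L\psi P_L,
\]
which is trivially in $1+\mathcal{B}_2(\mathcal{H})$, has explicit inverse $1+J|_L\psi P_L$, satisfies $f(L)=Gr_T$ by inspection, and whose symplecticity $w(f\xi,f\eta)=w(\xi,\eta)$ is a three-line computation using only the symmetry of $\psi$. This buys the paper a considerably shorter argument with no functional calculus; your unitary $g$, on the other hand, has the mild advantage of also lying in $U(\mathcal{H}_J)$, which could be useful elsewhere but is not needed here.
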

\begin{proof} We will adapt the above local chart $\phi_{L}$ to our set. Let $L=g(L_0)\in \mathcal{O}_{L_0}$, first we see that $\phi_{L}(\Omega({L}^{\perp}) \cap \mathcal{O}_{L_0}) \subset \mathcal{B}_2(L)_s$. Indeed, if  $W$ belongs to $\Omega({L}^{\perp}) \cap \mathcal{O}_{L_0}$ then we can write $W ={Gr}_T=h(L_0)$ for some $h\in{\rm Sp}_2(\mathcal{H})$ and since $L_0=g^{-1}(L)$ we have that $W=hg^{-1}(L)$ and it is obvious that we can write now $W=\tilde{g}(L)$ with $\tilde{g} \in {\rm Sp}_2(\mathcal{H})$. If we write $\tilde{g}=1+k$ where $k\in \mathcal{B}_2(\mathcal{H})$ then the orthogonal projection $\pi_1$ restricted to $W$ can be written as 
$$\pi_1\vert_W(w)=\pi_1(\tilde{g}l)=\pi_1(l+kl)=\pi_1(k(l))=\pi_1(k(\tilde{g}^{-1}w))$$ 
where $W \ni w=\tilde{g}(l)$ and $l\in L$. Thus we have $$\pi_1\vert_W=\pi_1 \circ k\circ\tilde{g}^{-1}\vert_W \in \mathcal{B}_2(W,L^{\perp}).$$
Then it is clear that  $\phi_{L}(W)=J\vert_{{L}^{\perp}}T \in \mathcal{B}_2(L)_s.$ Now we have the restricted chart $$\phi_{L} \vert _{\Omega({L}^{\perp}) \cap\mathcal{O}_{L_0}} : \Omega({L}^{\perp}) \cap\mathcal{O}_{L_0} \longrightarrow \mathcal{B}_2(L)_s.$$
To conclude we will see that this restricted map is also onto. Let $\psi \in \mathcal{B}_2(L)_s$ and as we did in Remark \ref{carsobre} we consider the operator $T:=-J\vert_{L}\psi$, then the only fact to prove is that $${Gr}_T=\lbrace v+(-J\vert_{L}\psi)v : v \in L \rbrace \in \mathcal{O}_{L_0}.$$ 
To prove it we define $f:=1-J\vert_{L}\psi P_{L}\in 1+ \mathcal{B}_2(\mathcal{H})$; it is invertible with inverse given by $1+J\vert_{L}\psi P_{L}$ and it is clear that ${Gr}_T=f(L).$ Now we have to show that $f$ is symplectic. Indeed, let $\xi,\eta \in \mathcal{H}$
 $$w((1-J\vert_{L}\psi P_{L})\xi,(1-J\vert_{L}\psi P_{L})\eta)= $$ 
$$w(\xi,\eta)+w(\xi,-J\vert_{L}\psi P_{L}\eta)+w(-J\vert_{L}\psi P_{L}\xi,\eta)+\underbrace{w(J\vert_{L}\psi P_{L}\xi,J\vert_{L}\psi P_{L}\eta)}_{=0} $$
and since $J$ is an isometry we have, 

\begin{align*}
w(\xi,-J\vert_{L}\psi P_{L}\eta)+w(-J\vert_{L}\psi P_{L}\xi,\eta) &= \langle J\xi,-J\vert_{L}\psi P_{L}\eta\rangle + \langle J(-J\vert_{L}\psi P_{L})\xi,\eta\rangle \\
&=  -\langle \xi,\psi P_{L}\eta\rangle+\langle\psi P_{L}\xi,\eta\rangle.\\
\end{align*}

If $\xi=\xi_0+\xi_0^{\perp}$ and $\eta=\eta_0+\eta_0^{\perp}$ are the respective decompositions in $L\oplus L^{\perp}$, then by the symmetry of $\psi$ the above equality results in
$$-\langle \xi,\psi P_{L}\eta\rangle+\langle\psi P_{L}\xi,\eta\rangle=\langle \xi_0+\xi_0^{\perp},\psi\eta_0\rangle + \langle \psi\xi_0,\eta_0+\eta_0^{\perp}\rangle$$
$$=-\langle \xi_0,\psi\eta_0\rangle + \langle \psi\xi_0,\eta_0\rangle=0.$$Then $$w((1-J\vert_{L}\psi P_{L})\xi,(1-J\vert_{L}\psi P_{L})\eta)=w(\xi,\eta)$$ and $f \in {\rm Sp}_2(\mathcal{H}).$ Since $L=g(L_0)$ we have $${Gr}_T=f(L)=fg(L_0) \in \mathcal{O}_{L_0}.$$
\end{proof}
As in the case of the full Lagrangian Grassmannian, for every $L\in \mathcal{O}_{L_0}$ we can identify the tangent space $T_L\mathcal{O}_{L_0}$ with the Hilbert space $\mathcal{B}_2(L)_s$. 

Since the differential of the inclusion map is an inclusion map, it is clear that the differential of the adapted charts are the restriction of the differential of full charts given by equation (\ref{difdecartas}). So, if $W \in \Omega(L^{\perp})\cap \mathcal{O}_{L_0}$ then the differential of the adapted chart is given by $d_W\phi_{L}\vert_{\Omega({L}^{\perp}) \cap\mathcal{O}_{L_0}}(H)=\eta^{*}H\eta$ where $H \in \mathcal{B}_2(W)_s$ and its inverse is  $$\mathcal{B}_2(L)_s \stackrel{d_\psi\phi_L^{-1}\vert_{\Omega({L}^{\perp}) \cap\mathcal{O}_{L_0}}  } \longrightarrow \mathcal{B}_2(W)_s=T_W \mathcal{O}_{L_0}$$   
\begin{equation}\label{difcartas2}
H \longmapsto {(\eta^{-1})}^*H\eta^{-1}. 
\end{equation}

\begin{prop}\label{difpiL} The differential of the map $\pi_{L}\vert_{{\rm Sp}_2(\mathcal{H})}$ at a point $g \in {{\rm Sp}_2(\mathcal{H})}$ is the restriction of the differential map $d_g\pi_{L}$ at $T_g{\rm Sp}_2(\mathcal{H})$ i.e.   
$$d_g\pi_{L}\vert_{{\rm Sp}_2(\mathcal{H})}: T_g{\rm Sp}_2(\mathcal{H})=\mathfrak{sp}_2(\mathcal{H})g \ni Xg\mapsto P_{g(L)}JX\vert_{g(L)} \in \mathcal{B}_2(g(L))_s.$$
\end{prop}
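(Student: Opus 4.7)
The plan is to reduce the statement to a straightforward application of the chain rule together with the already-established formula for $d_g\pi_L$ on the full symplectic group, after which the only thing to verify is that the resulting operator lies in the correct (smaller) space $\mathcal{B}_2(g(L))_s$.

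More precisely, I would first observe that there is a commutative diagram
\[
\xymatrix{
{\rm Sp}_2(\mathcal{H}) \ar[r]^{j} \ar[d]_{\pi_L|_{{\rm Sp}_2(\mathcal{H})}} & {\rm Sp}(\mathcal{H}) \ar[d]^{\pi_L} \\
\mathcal{O}_{L_0} \ar[r]^{i} & \Lambda(\mathcal{H})
}
\]
where $j$ and $i$ are the inclusions. By Theorem \ref{cartasOL_0} the map $i$ is an immersion, and by the construction of the manifold structure on ${\rm Sp}_2(\mathcal{H})$ recalled in the Background section the inclusion $j$ is also an immersion; in both cases the differentials are simply the inclusions of the tangent spaces $\mathfrak{sp}_2(\mathcal{H})g \hookrightarrow \mathfrak{sp}(\mathcal{H})g$ and $\mathcal{B}_2(g(L))_s \hookrightarrow \mathcal{B}(g(L))_s$ respectively, where the latter follows directly from the discussion preceding \eqref{difcartas2}.

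Applying the chain rule to $i\circ (\pi_L|_{{\rm Sp}_2(\mathcal{H})}) = \pi_L \circ j$ at a point $g\in {\rm Sp}_2(\mathcal{H})$ gives
\[
d_{\pi_L(g)}i \circ d_g(\pi_L|_{{\rm Sp}_2(\mathcal{H})}) = d_g\pi_L \circ d_gj.
\]
Since both $d_{\pi_L(g)}i$ and $d_gj$ are inclusions, this identity shows at once that $d_g(\pi_L|_{{\rm Sp}_2(\mathcal{H})})(Xg)$ must coincide, as an element of $\mathcal{B}(g(L))_s$, with the right-hand side of the known formula, namely $P_{g(L)}JX|_{g(L)}$, for every $X \in \mathfrak{sp}_2(\mathcal{H})$.

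The only remaining point, and really the only one requiring a small verification, is that this operator actually belongs to the smaller space $\mathcal{B}_2(g(L))_s$. This is immediate from the ideal property of the Hilbert--Schmidt class: since $X \in \mathfrak{sp}_2(\mathcal{H}) \subset \mathcal{B}_2(\mathcal{H})$, the operator $JX$ lies in $\mathcal{B}_2(\mathcal{H})$, and therefore so does $P_{g(L)}JX$; restricting the domain to the closed subspace $g(L)$ preserves the Hilbert--Schmidt property (the 2-norm of the restriction being bounded by $\|P_{g(L)}JX\|_2 \leq \|X\|_2$). Symmetry is inherited from the full statement on ${\rm Sp}(\mathcal{H})$. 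I do not anticipate any substantive obstacle; the proposition is essentially a bookkeeping consequence of the chain rule and the manifold structures set up previously.
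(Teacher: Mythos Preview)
Your proposal is correct and follows essentially the same approach as the paper: both set up the commutative square with the inclusions $j=i_2$ and $i=i_1$, differentiate the identity $\pi_L\circ j = i\circ \pi_L|_{{\rm Sp}_2(\mathcal{H})}$ via the chain rule, and use that the differentials of the inclusions are themselves inclusions of tangent spaces. The paper's proof omits the explicit verification that $P_{g(L)}JX|_{g(L)}\in\mathcal{B}_2(g(L))_s$, which you supply; this is a harmless addition and does not change the substance of the argument.
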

\begin{proof} We have the following commutative diagram 
$$\xymatrix{
{\rm Sp}(\mathcal{H}) \ar[r]^{\pi_{L}} &\Lambda(\mathcal{H})\\
{\rm Sp}_2(\mathcal{H}) \ar@{_{(}->}[u]^{i_2} \ar[r]_{\pi_{L}\vert_ {{\rm Sp}_2(\mathcal{H})}} &\ \ar@{_{(}->}[u]^{i_1} \mathcal{O}_{L_0}} $$ 
If we derive at a point $g \in {\rm Sp}_2(\mathcal{H})$ the equation $\pi_{L}\circ i_2=i_1\circ \pi_{L}\vert_{{\rm Sp}_2(\mathcal{H})}$ and use that the differential of the inclusion maps $i_1$ and $i_2$ at $h(L_0)$ and at $h$ respectively are inclusions, we have $d_g\pi_{L}\vert_{{\rm Sp}_2(\mathcal{H})}(Xg)=d_g\pi_{L}(Xg)$ for every $X \in \mathfrak{sp}_2(\mathcal{H})$.     
\end{proof}

In the followings steps we will show the main result of this section, that is the Lie subgroup structure of the isotropy group. To do it we will use the above submanifold structure constructed over $\mathcal{O}_{L_0}$. If $M$ and $N$ are smooth Banach manifolds a smooth map $f:M \rightarrow N$ is a submersion if the tangent map $d_xf$ is onto and its kernel is a complemented subspace of $T_xM$ for all $x\in M$. This fact is equivalent to the existence of smooth local section (see \cite{Lang}). The next proposition is essential for the proof.    
\begin{prop} The map $\pi_{L_0}$ and its restriction $\pi_{L_0}\vert_{{\rm Sp}_2(\mathcal{H})}$ are smooth submersions when we consider in $\Lambda(\mathcal{H})$ (resp. in $\mathcal{O}_{L_0}$) the above manifold structure.
\end{prop}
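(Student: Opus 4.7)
The plan is to exhibit explicit smooth local sections of $\pi_{L_0}$, which by the criterion recalled from Lang reduces the submersion property to a concrete construction: if $s$ is a smooth local section through $g$, then $ds$ is a right inverse of $d\pi_{L_0}$, giving both surjectivity and a topological splitting of the kernel.

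First I would build a section at the base point $L_0$, reusing the computation already carried out in the proof of Theorem \ref{cartasOL_0}. For $\psi\in\mathcal{B}(L_0)_s$ the operator $\sigma(\psi):=1-J|_{L_0}\psi P_{L_0}$ was shown there to be symplectic and to satisfy $\sigma(\psi)(L_0)=\phi_{L_0}^{-1}(\psi)$. Since $\sigma:\mathcal{B}(L_0)_s\to{\rm Sp}(\mathcal{H})$ is affine in $\psi$, it is smooth, and therefore $s_0:=\sigma\circ\phi_{L_0}:\Omega(L_0^{\perp})\to{\rm Sp}(\mathcal{H})$ is a smooth local section of $\pi_{L_0}$ with $s_0(L_0)=1$. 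For the restricted map the same formula produces $\sigma(\psi)\in 1+\mathcal{B}_2(\mathcal{H})$ whenever $\psi\in\mathcal{B}_2(L_0)_s$, so $s_0$ restricts to a smooth section $\Omega(L_0^{\perp})\cap\mathcal{O}_{L_0}\to{\rm Sp}_2(\mathcal{H})$ of $\pi_{L_0}|_{{\rm Sp}_2(\mathcal{H})}$.

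To obtain a section through an arbitrary $g$ I would translate by $g$. Left multiplication $L_g:h\mapsto gh$ is a diffeomorphism of ${\rm Sp}(\mathcal{H})$ (and of ${\rm Sp}_2(\mathcal{H})$ when $g\in{\rm Sp}_2(\mathcal{H})$), and the action of $g$ on $\Lambda(\mathcal{H})$ (resp.\ on $\mathcal{O}_{L_0}$, since $g\in{\rm Sp}_2(\mathcal{H})$ acts as a diffeomorphism of the orbit) is a diffeomorphism as well. The identity $\pi_{L_0}(gh)=g\cdot\pi_{L_0}(h)$ then shows that
\[
s_g:L\longmapsto g\cdot s_0\bigl(g^{-1}(L)\bigr),
\]
defined on the open neighborhood $g\cdot\Omega(L_0^{\perp})$ of $g(L_0)$, is a smooth local section with $s_g(g(L_0))=g$. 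This proves the submersion property at $g$ for both $\pi_{L_0}$ and its restriction.

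The only points that need verification are routine: that $g\cdot\Omega(L_0^{\perp})$ is indeed an open neighborhood of $g(L_0)$ (immediate from continuity of the action) and that the translations involved are smooth in each of the two manifold structures under consideration. Since the substantive analytic work was already done in Theorem \ref{cartasOL_0}, I do not expect any genuine obstacle; the argument is essentially a homogeneity reduction to the computation at $L_0$.
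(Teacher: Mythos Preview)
Your argument is correct but follows a genuinely different route from the paper's. The paper builds its local section via the symmetries $\epsilon_L$: for $L$ near $L_0$ it sets $g_L=\tfrac12(1+\epsilon_L\epsilon_{L_0})$ and takes the unitary part $u_L$ of the polar decomposition as the section value. Checking that $u_L\in{\rm Sp}_2(\mathcal{H})$ then requires a Riesz functional calculus argument in the algebra $\mathbb{C}1+\mathcal{B}_2(\mathcal{H}_J)$ together with a C*-algebra computation in $\mathbb{C}1+\mathcal{K}(\mathcal{H}_J)$, and smoothness is read off from the chart expression of $\epsilon_L$ via formula~(\ref{proyort}). You instead recycle the operator $f=1-J|_{L_0}\psi P_{L_0}$ already produced in Theorem~\ref{cartasOL_0}; since $f$ is affine in $\psi$, smoothness is immediate, and the facts $f\in{\rm Sp}(\mathcal{H})$ (resp.\ $f\in{\rm Sp}_2(\mathcal{H})$ when $\psi\in\mathcal{B}_2(L_0)_s$) and $f(L_0)=\phi_{L_0}^{-1}(\psi)$ were verified there. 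Your translation $s_g(L)=g\,s_0(g^{-1}L)$ to reach an arbitrary $g$ is the standard homogeneity reduction and is fine; the paper postpones the analogous translation to the subsequent Corollary. What the paper's construction buys is a section taking values in the unitary group $U(\mathcal{H}_J)\subset{\rm Sp}(\mathcal{H})$, a feature that is convenient elsewhere but not needed for the bare submersion statement; your approach is shorter and avoids the functional-calculus detour.
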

\begin{proof} First we will prove that the map $\pi_{L_0}\vert_{{\rm Sp}_2(\mathcal{H})}$ has local cross sections on a neighborhood of $L_0$, the proof is adapted from \cite{Andruchow1}. Using the symmetry over $R(Q)$ we have \begin{equation} \label{e1}\epsilon_{R(Q)}=2P_{R(Q)}-1=2(P_{L_0}+\mathcal{B}_2)-1=\epsilon_{L_0}+\mathcal{B}_2.\end{equation} For $L\in \mathcal{O}_{L_0}$ close to $L_0$, we consider the element  $g_L=1/2(1+\epsilon_{L}\epsilon_{L_0})$; it is invertible (in fact, it can be show that it is invertible if  $\Vert\epsilon_{L}-\epsilon_{L_0}\Vert<2$) and it commutes with $J$, so it belongs to $GL(\mathcal{H}_J)$. From equation (\ref{e1}) we have $$\epsilon_{L}\epsilon_{L_0} \in (\epsilon_{L_0}+\mathcal{B}_2(\mathcal{H}))\epsilon_{L_0} \in 1+\mathcal{B}_2(\mathcal{H})$$ and then it is clear that $g_L \in 1+\mathcal{B}_2(\mathcal{H}_J).$ Thus $g_L$ is complex and invertible in a neighboord of $\epsilon_{L_0}$. Note that     
$$g_L\epsilon_{L_0}=1/2(\epsilon_{L_0}+\epsilon_{L})=\epsilon_{L}g_L$$ and also that $g^*g$ commutes with $\epsilon_{L_0}$. If $\left|x\right|=(x^*x)^{1/2}$ denote the absolute value and $g_L=u_L\left|g_L\right|$ is the polar decomposition, then $u_L=g_L({g_L}^{*}g_L)^{-1/2} \in U(\mathcal{H}_J)\subset {\rm Sp}(\mathcal{H})$. We define the local cross section for $L$ close to $L_0$ as  $$\sigma(L)=u_L.$$ Now we have to prove that $\pi_{L_0}\vert_{{\rm Sp}_2(\mathcal{H})}(\sigma (L))=L$. If we identify the subspace with the symmetry this is equivalent to prove that $\epsilon_{\pi_{L_0} \vert_{{\rm Sp}_2(\mathcal{H})}  (\sigma(L))}=\epsilon_{L}$. Indeed,  $$\epsilon_{\pi_{L_0}(u_L)}=u_L\epsilon_{L_0}u_L^{*}=g_L{(g_L^{*}g_L)}^{-1/2}\epsilon_{L_0}(g_L^{*}g_L)^{-1/2}g_L^{*}=g_L\epsilon_{L_0}g_L^{-1}=\epsilon_L.$$
Let us prove that it takes values in ${\rm Sp}_2(\mathcal{H})$. Since $\mathbb{C}1+\mathcal{B}_2(\mathcal{H}_J)$ is a *-Banach algebra and $g_L \in GL_2(\mathcal{H}_J)$ by the Riesz functional calculus we have that $u_L=g_L\left|g_L\right|^{-1} \in \mathbb{C}1+\mathcal{B}_2(\mathcal{H}_J)$. Thus $u_L=\beta 1+b$ with $b\in \mathcal{B}_2(\mathcal{H}_J)$. On the other hand, note that ${g_L}^{*}g_L$ is a positive operator which lies in the C*-algebra 
$\mathbb{C}1+\mathcal{K}(\mathcal{H}_J)$. Therefore its square root is of the form $r1+k$ with $r\geq 0$ and $k$ compact. Then $${g_L}^{*}g_L=(r1+k)^{2}=r^{2}+k'$$ and since ${g_L}^{*}g_L \in GL_2(\mathcal{H}_J)$ we have $$r^{2}1+k'=1+b'$$ with $b' \in \mathcal{B}_2(\mathcal{H}_J)$. Since  $\mathbb{C}1$ and $\mathcal{K}(\mathcal{H}_J)$ are linearly independent, it follows that $r=1$. Then it is clear that $u_L \in U_2(\mathcal{H}_J)\subset {\rm Sp}_2(\mathcal{H})$ and $\sigma$ is well defined. To conclude the proof we show that the local section $\sigma$ is smooth. If $L$ lies in a small neighborhood of $L_0$ we have $$L=\phi^{-1}_{L_0}(\psi)=Gr_{-J\vert_L \psi}=(1-J\vert_{L_0}\psi P_{L_0})(L_0)=g(L_0)  \ \in \  \Omega({L_0}^{\perp})\cap \mathcal{O}_{L_0}.$$ The idempotent of range $L$ is $$Q:=gP_{L_0}g^{-1}=(1-J\vert_{L_0}\psi P_{L_0}) P_{L_0}(1+J\vert_{L_0}\psi P_{L_0})=P_{L_0}-J\vert_{L_0}\psi P_{L_0}$$ and it is smooth as a function of $\psi$. Since the formula of the orthogonal projector (\ref{proyort}) is smooth, the local expression of $\sigma$ will be also smooth. Indeed, the symmetry in the chart will be $$\epsilon_L=2P_{R(gP_{L_0}g^{-1})}-1=2QQ^{*}{(1-(Q-Q^{*})^{2})}^{1/2}-1$$ and it is clearly smooth as a function of $\psi$, because $Q$ and the operations involved (product, involution, square root) are smooth. Then it is clear that the invertible element $g_L$ and its unitary part $u_L$ are smooth too. Finally the local expression $\sigma\circ\phi^{-1}_{L_0}$ is smooth as a function of $\psi$.  
Since the full Lagrangian Grassmannian can be expressed as an orbit for a fixed $L_0$, the proof of smoothness of the local section of $\pi_{L_0}$ is analogous to that of the restricted map $\pi_{L_0}\vert_{{\rm Sp}_2(\mathcal{H})}$.
\end{proof}

\begin{coro} If $L$ is any subspace in the full Lagrangian Grassmannian or in $\mathcal{O}_{L_0}$ then the map $\pi_L$ and its restriction $\pi_L\vert_{{\rm Sp}_2(\mathcal{H})}$ have local cross sections on a neighborhood of $L$.
\end{coro}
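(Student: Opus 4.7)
The plan is to bootstrap from the $L_0$-case proved in the previous proposition using the transitivity of the group action. Since ${\rm Sp}(\mathcal{H})$ acts transitively on $\Lambda(\mathcal{H})$ and ${\rm Sp}_2(\mathcal{H})$ acts transitively on $\mathcal{O}_{L_0}$, any $L$ in the relevant space can be written as $L=h(L_0)$ for some $h$ in the appropriate group. The idea is then to conjugate the local section at $L_0$ by $h$ to obtain a local section at $L$.

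More precisely, I would proceed as follows. Let $\sigma_0:U_0\to{\rm Sp}(\mathcal{H})$ be the smooth local section of $\pi_{L_0}$ on a neighborhood $U_0$ of $L_0$ provided by the previous proposition (respectively, $\sigma_0:U_0\to{\rm Sp}_2(\mathcal{H})$ on a neighborhood $U_0\subset\mathcal{O}_{L_0}$ of $L_0$). Given $L=h(L_0)$, put $U_L:=h(U_0)$, which is an open neighborhood of $L$ because the diffeomorphism $L'\mapsto h(L')$ is a homeomorphism of $\Lambda(\mathcal{H})$ (respectively of $\mathcal{O}_{L_0}$, using that ${\rm Sp}_2(\mathcal{H})$ acts smoothly on the submanifold $\mathcal{O}_{L_0}$ by Theorem \ref{cartasOL_0}). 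Define
$$\sigma_L:U_L\longrightarrow{\rm Sp}(\mathcal{H}),\qquad \sigma_L(L'):=h\,\sigma_0(h^{-1}(L'))\,h^{-1}.$$
Then a direct computation gives
$$\pi_L(\sigma_L(L'))=\sigma_L(L')(L)=h\,\sigma_0(h^{-1}(L'))\,h^{-1}(h(L_0))=h\,\sigma_0(h^{-1}(L'))(L_0)=h(h^{-1}(L'))=L',$$
so $\sigma_L$ is a section of $\pi_L$ on $U_L$.

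Smoothness of $\sigma_L$ follows since it is a composition of smooth maps: $L'\mapsto h^{-1}(L')$ is smooth, $\sigma_0$ is smooth by the previous proposition, and left/right multiplication by $h$ and $h^{-1}$ in ${\rm Sp}(\mathcal{H})$ (resp.\ ${\rm Sp}_2(\mathcal{H})$) is smooth because these are Banach-Lie groups. For the restricted case, when $L\in\mathcal{O}_{L_0}$ we pick $h\in{\rm Sp}_2(\mathcal{H})$; then $\sigma_0(h^{-1}(L'))\in{\rm Sp}_2(\mathcal{H})$ by the restricted proposition, and since ${\rm Sp}_2(\mathcal{H})$ is a subgroup of ${\rm Sp}(\mathcal{H})$ it is stable under conjugation, so $\sigma_L$ actually takes values in ${\rm Sp}_2(\mathcal{H})$ as required.

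The proof is essentially a bookkeeping argument; there is no real analytic obstacle. The only point that deserves care is the restricted setting, where one must verify that the translated section still lands in ${\rm Sp}_2(\mathcal{H})$ and that $h(U_0)$ is open in the topology of $\mathcal{O}_{L_0}$ (not merely in that of $\Lambda(\mathcal{H})$), but both facts are immediate from the subgroup/submanifold structures already established.
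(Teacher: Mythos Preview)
Your proof is correct and follows exactly the paper's approach: the paper also translates the section $\sigma$ at $L_0$ to $L=g(L_0)$ by conjugation, defining $\sigma_L(h(L_0))=g\,\sigma(g^{-1}h(L_0))\,g^{-1}$. You have merely supplied more of the routine verifications (section property, smoothness, stability under conjugation in the restricted case) that the paper leaves implicit.
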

\begin{proof} The above map $\sigma$ can be translated using the action to any $L=g(L_0)$. That is, $$\sigma_L(h(L_0))=g\sigma(g^{-1}h(L_0))g^{-1}$$ where $h(L_0)$ lies on a neighborhood of $L$.
\end{proof}

\begin{teo} \label{sublie} The isotropy groups ${{\rm Sp}(\mathcal{H})}_{L}$ and ${{\rm Sp}_2(\mathcal{H})}_{L}$ of the symplectic group and of the restricted symplectic group are Lie subgroups of them with their respective topology. Their Lie algebras are $$\mathfrak{sp}(\mathcal{H})_{L}=\lbrace x \in \mathfrak{sp}(\mathcal{H}) : x(L)\subseteq L\rbrace$$  $$ \mathfrak{sp}_2(\mathcal{H})_{L}=\lbrace x\in \mathfrak{sp}_2(\mathcal{H}): x(L)\subseteq L\rbrace.$$
\end{teo}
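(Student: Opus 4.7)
The plan is to realize each isotropy group as the fiber of the corresponding orbit map through the point $L$ and apply the Banach-manifold version of the submersion theorem, then read off the Lie algebra as the kernel of the differential at the identity.

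First, by the corollary just above, for any $L \in \Lambda(\mathcal{H})$ (respectively any $L \in \mathcal{O}_{L_0}$) the map
$$\pi_L : {\rm Sp}(\mathcal{H}) \longrightarrow \Lambda(\mathcal{H}), \qquad \pi_L\vert_{{\rm Sp}_2(\mathcal{H})} : {\rm Sp}_2(\mathcal{H}) \longrightarrow \mathcal{O}_{L_0}$$
is a smooth submersion admitting local smooth cross sections on a neighborhood of $L$. By definition of the isotropy group,
$${\rm Sp}(\mathcal{H})_L = \pi_L^{-1}(\{L\}), \qquad {\rm Sp}_2(\mathcal{H})_L = (\pi_L\vert_{{\rm Sp}_2(\mathcal{H})})^{-1}(\{L\}).$$
A fiber of a smooth submersion between Banach manifolds is a closed smooth submanifold whose tangent space at any point $g$ of the fiber equals $\ker d_g \pi_L$. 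Hence each isotropy group carries a natural Banach submanifold structure inherited from the ambient group, and since it is obviously closed under the group operations (the action preserves $L$), it is a Banach--Lie subgroup in the respective topology.

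Next, I would identify the Lie algebra as the tangent space at the identity, i.e., as $\ker d_1 \pi_L$. From Proposition \ref{difpiL} (and the analogous formula for the full symplectic group recalled before it), this differential at $g=1$ acts by
$$d_1 \pi_L(X) = P_L J X\vert_L.$$
Thus $X$ lies in the kernel precisely when $JX(\ell) \in L^\perp$ for every $\ell \in L$, i.e., $JX(L) \subseteq L^\perp$. Applying $-J$ and using the Lagrangian condition $J(L) = L^\perp$ together with $J^2 = -1$, this is equivalent to $X(L) \subseteq L$. The same argument applied to $\pi_L\vert_{{\rm Sp}_2(\mathcal{H})}$ yields the analogous characterization within $\mathfrak{sp}_2(\mathcal{H})$. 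This gives the two descriptions of the Lie algebras claimed in the statement.

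The only delicate point in the scheme is the Banach version of the submersion theorem: one needs $\ker d_g \pi_L$ to be a complemented subspace of $T_g {\rm Sp}(\mathcal{H})$ (and similarly for the restricted group). This is not an independent issue because the previous proposition furnishes smooth local sections, which is equivalent to the submersion condition with complemented kernel in the Banach setting (see, e.g., \cite{Lang}); so no extra verification beyond applying the standard theorem is needed, and the proof reduces to the two observations above.
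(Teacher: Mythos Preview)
Your proposal is correct and follows essentially the same approach as the paper: both realize the isotropy group as the fiber $\pi_L^{-1}(\{L\})$ of the submersion established in the preceding proposition/corollary, invoke the inverse function/submersion theorem in the Banach setting to obtain the Lie subgroup structure with Lie algebra $\ker d_1\pi_L$, and then compute this kernel via the formula $d_1\pi_L(X)=P_LJX\vert_L$ together with $J(L^\perp)=L$. Your write-up is slightly more explicit about the complemented-kernel hypothesis being automatic from the existence of smooth local sections, but the argument is the same.
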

\begin{proof} Since the maps $d_1\pi_{L}$ and $d_1\pi_{L}\vert_{{\rm Sp}_2(\mathcal{H})}$ are submersions then by the inverse function theorem, we have that the isotropy groups are Lie subgroups and their Lie algebras are $\ker d_1\pi_{L}$ and $\ker d_1\pi_{L}\vert_{{\rm Sp}_2(\mathcal{H})}$ respectively. A short computation show us that $\ker d_1\pi_{L}=\lbrace x \in \mathfrak{sp}(\mathcal{H}) : x(L)\subseteq L\rbrace$ and $\ker d_1\pi_{L}\vert_{{\rm Sp}_2(\mathcal{H})}=\lbrace x\in \mathfrak{sp}_2(\mathcal{H}): x(L)\subseteq L\rbrace.$ Indeed, if $P_{L}JX\vert_{L}=0$ then  $JX\vert_{L} \in L^{\perp}$ and thus $-X\vert_{L} \in J(L^{\perp})=L$.
\end{proof}
\begin{rem} The Lie algebra  $\mathfrak{sp}_2(\mathcal{H})_{L}$ consists of all operators $x \in \mathfrak{sp}_2(\mathcal{H})$ that are $L$ invariant, so we can give another characterization of this algebra using the orthogonal projection $P_L$. That is,   
\begin{equation} \label{lieiso} \mathfrak{sp}_2(\mathcal{H})_{L}=\lbrace x\in \mathfrak{sp}_2(\mathcal{H}): xP_L=P_LxP_L \rbrace.\end{equation} In block matrix form, this operators corresponding to the upper triangular elements of $\mathfrak{sp}_2(\mathcal{H})$.   
\end{rem}

\section{Metrics structures in $\mathcal{O}_{L_0}$}
\subsection{The ambient metric}
Given $v,w \in T_W \mathcal{O}_{L_0}=\mathcal{B}_2(W)_s$, we define the inner product $$ \langle v , w \rangle_W:=tr_W(w^{*}v)=\sum_{i=1}^\infty   \langle w^{*}v e_i , e_i \rangle$$ where $\lbrace e_i\rbrace$ is an orthonormal basis of the subspace $W$. It is not difficult to see that the above inner product can be expressed over the full Hilbert space $\mathcal{H}$ using the orthogonal projection $P_W$. That is, $tr_W(w^{*}v)=Tr(w^{*}vP_W).$ The ambient metric for $v \in T_W \mathcal{O}_{L_0}=\mathcal{B}_2(W)_s$ is $$\mathcal{A}(W,v):=tr_W(v^{*}v)^{1/2},$$ and it can be expressed by the orthogonal projection over the full Hilbert space. Indeed, if $\lbrace e_i\rbrace$ is an orthonormal basis for $\mathcal{H}$ then  
$$\Vert vP_W\Vert_2^2=\sum_{i} \langle vP_W e_i , vP_We_i \rangle=\sum_i \langle v^{*}vP_W e_i , P_We_i \rangle $$
\begin{equation}\label{full}
=\sum_i \langle v^{*}vP_W e_i , e_i \rangle=tr(v^{*}vP_W)=tr_W(v^{*}v)
\end{equation}
\begin{prop} The ambient metric is smooth in $\mathcal{O}_{L_0}$.
\end{prop}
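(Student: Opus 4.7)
The plan is to work in a local chart and reduce smoothness of $\mathcal{A}$ to smoothness of a $\mathcal{B}_2(\mathcal{H})$-valued map, then appeal to the fact that the squared Hilbert--Schmidt norm is a smooth quadratic form. Fix $L\in \mathcal{O}_{L_0}$ and use the chart $\phi_L$, so that a point of the tangent bundle over $\Omega(L^{\perp})\cap \mathcal{O}_{L_0}$ is encoded by a pair $(\psi,H)\in \mathcal{B}_2(L)_s\times \mathcal{B}_2(L)_s$, with $W=\phi_L^{-1}(\psi)$ and the corresponding tangent vector $v=(\eta^{-1})^*H\eta^{-1}$ by (\ref{difcartas2}). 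By the identity (\ref{full}) one has $\mathcal{A}(W,v)^2=\|vP_W\|_2^2$, so it suffices to prove that the map $(\psi,H)\mapsto vP_W$ is smooth as a function into $\mathcal{B}_2(\mathcal{H})$. Composing with $a\mapsto \|a\|_2^2$ then yields smoothness of $\mathcal{A}^2$ (and hence of $\mathcal{A}$ away from the zero section, which is the usual notion of smoothness for a metric tensor).

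Next I would analyze the two factors separately, using that the multiplication $\mathcal{B}_2(\mathcal{H})\times \mathcal{B}(\mathcal{H})\to \mathcal{B}_2(\mathcal{H})$, $(a,b)\mapsto ab$, is bounded bilinear and therefore smooth. For the tangent factor, recall from the proof of Theorem \ref{cartasOL_0} that $f=1-J|_L\psi P_L\in 1+\mathcal{B}_2(\mathcal{H})$ is symplectic with $W=f(L)$, and $\eta$ is precisely the restriction $f|_L$. Thus $\eta^{\pm 1}$ extend to bounded operators on $\mathcal{H}$ that depend polynomially on $\psi$, so $(\psi,H)\mapsto (\eta^{-1})^*H\eta^{-1}$ is smooth as a map into $\mathcal{B}_2(\mathcal{H})$, as a bounded trilinear expression with one Hilbert--Schmidt factor. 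For the projection factor, the idempotent $Q=fP_Lf^{-1}$ depends smoothly on $\psi$ in operator norm, and applying formula (\ref{proyort}) gives $P_W=QQ^*\bigl(1-(Q-Q^*)^2\bigr)^{1/2}$. Since $Q-Q^*$ is skew-adjoint and small for $\psi$ near $0$, the operator $1-(Q-Q^*)^2$ is a positive perturbation of the identity bounded below, so its square root is produced by the Riesz holomorphic functional calculus and depends smoothly on $\psi$ in operator norm; consequently $\psi\mapsto P_W$ is smooth into $\mathcal{B}(\mathcal{H})$.

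The main subtlety is keeping track of the correct operator topology at each stage. Although the opening lemma of Section 3 tells us that $P_W-P_L\in \mathcal{B}_2(\mathcal{H})$, I only need operator-norm smoothness of $P_W$, because the Hilbert--Schmidt regularity of the product $vP_W$ is already supplied by $v$; this is exactly what the bilinear continuity $\|ab\|_2\le \|a\|_2\|b\|$ provides. Similarly, I must ensure that extending $\eta^{-1}$ from $W$ to all of $\mathcal{H}$ is done coherently in $\psi$, which is precisely what the global representative $f^{-1}=1+J|_L\psi P_L$ accomplishes. Once these choices are in place, $vP_W\in \mathcal{B}_2(\mathcal{H})$ is a smooth function of $(\psi,H)$, and therefore so is $\mathcal{A}(W,v)^2=\|vP_W\|_2^2$, which is the desired local coordinate expression of the metric.
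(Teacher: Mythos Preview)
Your proposal is correct and follows essentially the same approach as the paper: both work in the chart $\phi_L$, write $\mathcal{A}(W,v)=\|(\eta^{-1})^*H\eta^{-1}P_W\|_2$ via (\ref{full}) and (\ref{difcartas2}), identify $\eta$ with the restriction of $f=1-J|_L\psi P_L$, obtain $P_W$ from the idempotent $Q=fP_Lf^{-1}$ through formula (\ref{proyort}), and conclude by smoothness of the algebraic operations involved. Your version is in fact more careful than the paper's, in that you explicitly track which factor carries the Hilbert--Schmidt regularity (namely $v$, via $\|ab\|_2\le\|a\|_2\|b\|$), justify the square root via holomorphic functional calculus, and note the standard caveat that only $\mathcal{A}^2$ is smooth across the zero section.
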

\begin{proof} Let $L\in \mathcal{O}_{L_0}$ and consider a neighborhood  $U:=\Omega(L^{\perp})\cap \mathcal{O}_{L_0}$ of it. For any $W \in U$, we can write it in the local chart $W=\phi_L^{-1}\psi=Gr_{(-J\vert_L\psi)}$. Let  $\eta_W : L \rightarrow W$ be the restriction of the orthogonal projection  $W\oplus L^{\perp} \stackrel{\pi}\rightarrow W$, then its local expression is; $$\eta_W(v)=\pi(v)=\pi ((v-J\vert_L\psi(v))+J\vert_L\psi (v))$$  
$$=(1-J\vert_L\psi) (v)  \  \ \  \mbox{for all} \ v \in L,$$  
and then it can be expressed by the compression of the operator $1-J\vert_L\psi P_L$ into the subspace $L$ i.e. $\eta_W=(1-J\vert_L\psi P_L)\vert_L$.
If we write the local expression of the metric using the classical differential structure of the tangent bundle with the differential of the chart $\phi_L^{-1} $ given in the formula (\ref{difcartas2}), for every $v \in TU$ we have \begin{equation}\label{localmetrica}
 \mathcal{A}(W,v)=\Vert d_\psi\phi_L^{-1}(H)P_W\Vert_2=
\Vert (\eta_W^{-1})^{*} H \eta_W^{-1} P_W \Vert_2, 
\end{equation}
where $\psi \in \phi_L(U)$ and $H \in \mathcal{B}_2(L)_s$ is the preimage of $v$. Since the projector $P_W=P_{Gr_{(-J\vert_L\psi)}}$ is smooth and the local expression of $\eta_W$ is also smooth as a function of $\psi$ and by smoothness of the operations involved (inverse, involution, product, trace) the formula (\ref{localmetrica}) is smooth. 
\end{proof}
\subsection{The geodesic distance}
The length of a smooth curve measured with the ambient metric will be denoted by $$L_{\mathcal{A}}(\gamma)=\int_0^1 \mathcal{A}(\gamma(t),\dot{\gamma}(t)) dt.$$  
Given two Lagrangian subspaces $S$ and $T$ in $\mathcal{O}_{L_0}$, we denote by $d_{\mathcal{A}}$ the geodesic distance using the ambient metric, $$d_{\mathcal{A}}(S,T)=\inf \lbrace L_{\mathcal{A}}(\gamma) : \gamma \ \mbox{joins} \ S  \ \mbox{and} \  T  \ \mbox{in} \ \mathcal{O}_{L_0}\rbrace.$$ 
In this section we will show that the metric space $(\mathcal{O}_{L_0},d_{\mathcal{A}})$ is complete and moreover we will find the geodesic curves of the Riemannian connection given by the ambient metric $\mathcal{A}$.
 
If $(L_n)\subset \mathcal{O}_{L_0}$ is any sequence we will denote by $L_n \stackrel{\mathcal{O}_{L_0}}\rightarrow L$ the convergence to some subspace $L\in \mathcal{O}_{L_0}$ in the topology given by the smooth structure of $\mathcal{O}_{L_0}$ (Theorem \ref{cartasOL_0}). 
\begin{prop}\label{convda} Let $(L_n)\subset \mathcal{O}_{L_0}$ such that  $L_n \stackrel{\mathcal{O}_{L_0}}\rightarrow L$; then $L_n \stackrel{d_{\mathcal{A}}}\longrightarrow L.$
\end{prop}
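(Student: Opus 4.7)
The plan is to exhibit, for each $n$ sufficiently large, an explicit curve joining $L$ to $L_n$ inside $\mathcal{O}_{L_0}$ whose ambient length is controlled by a quantity that tends to $0$ with $n$. Since $L_n \stackrel{\mathcal{O}_{L_0}}\rightarrow L$, eventually $L_n$ lies in the chart neighbourhood $\Omega(L^{\perp})\cap \mathcal{O}_{L_0}$ around $L$, and $\psi_n:=\phi_L(L_n)\to 0$ in $\mathcal{B}_2(L)_s$; in particular $\|\psi_n\|_2\to 0$ and $\|\psi_n\|\to 0$.

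The natural candidate is the straight segment in the chart, namely
$$\gamma_n(t)=\phi_L^{-1}(t\psi_n),\qquad t\in[0,1],$$
which is a smooth curve in $\mathcal{O}_{L_0}$ from $\gamma_n(0)=L$ to $\gamma_n(1)=L_n$. Its derivative in the chart is the constant $\psi_n\in \mathcal{B}_2(L)_s$, so by the inverse differential formula (\ref{difcartas2}) the velocity at time $t$ is $\dot\gamma_n(t)=(\eta_t^{-1})^{*}\psi_n\eta_t^{-1}\in \mathcal{B}_2(\gamma_n(t))_s$, where $\eta_t:=\eta_{\gamma_n(t)}$ is the restriction to $L$ of the projection onto $\gamma_n(t)$ along $L^{\perp}$. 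Then, by the local expression (\ref{localmetrica}) of $\mathcal{A}$,
$$L_{\mathcal{A}}(\gamma_n)=\int_0^1 \bigl\| (\eta_t^{-1})^{*}\,\psi_n\,\eta_t^{-1}\,P_{\gamma_n(t)}\bigr\|_2\,dt.$$

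The main estimate is a uniform-in-$t$ bound on the integrand. Recall from the proof of smoothness of $\mathcal{A}$ that $\eta_t=(1-J|_L(t\psi_n)P_L)|_L$, which tends to the identity on $L$ in operator norm uniformly in $t\in[0,1]$ as $\|\psi_n\|\to 0$; in particular $\eta_t$ is invertible for large $n$ with $\|\eta_t^{-1}\|$ bounded by some constant $C$ independent of $t$ and of $n$ (for $n$ large). Using the standard inequality $\|ABC\|_2\le \|A\|\,\|B\|_2\,\|C\|$ for the Hilbert--Schmidt norm, and $\|P_{\gamma_n(t)}\|\le 1$, I get
$$\bigl\| (\eta_t^{-1})^{*}\,\psi_n\,\eta_t^{-1}\,P_{\gamma_n(t)}\bigr\|_2\le C^{2}\,\|\psi_n\|_2,$$
uniformly in $t\in[0,1]$. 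Integrating yields $L_{\mathcal{A}}(\gamma_n)\le C^{2}\|\psi_n\|_2$, and since $d_{\mathcal{A}}(L_n,L)\le L_{\mathcal{A}}(\gamma_n)$, this gives $d_{\mathcal{A}}(L_n,L)\to 0$.

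The only delicate point is justifying the uniform bound on $\|\eta_t^{-1}\|$ for $t\in[0,1]$ and for all $n$ large enough. This follows from the Neumann series since $1-\eta_t = J|_L(t\psi_n)P_L|_L$ has operator norm at most $\|\psi_n\|\to 0$, so for $n$ large $\|1-\eta_t\|\le 1/2$ uniformly in $t$, giving $\|\eta_t^{-1}\|\le 2$. Everything else is a bookkeeping application of the local formula for $\mathcal{A}$.
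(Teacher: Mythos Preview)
Your argument is correct, but it takes a different route from the paper's. The paper does not work in the chart at all: it uses the smooth local section $\sigma_L$ of the submersion $\pi_L$ to write $\sigma_L(L_n)=e^{z_n}$ with $z_n\in\mathfrak{sp}_2(\mathcal{H})$ and $\|z_n\|_2\to 0$, and then takes the group curve $\gamma_n(t)=e^{tz_n}(L)$; the velocity is computed via Proposition~\ref{difpiL} as $P_{\gamma_n(t)}Jz_n|_{\gamma_n(t)}$, and the ambient length is bounded directly by $\|z_n\|_2$ using only $\|P\|\le 1$ and $\|J\|=1$. Your proof instead stays entirely inside the chart $\phi_L$, uses the straight segment $t\mapsto t\psi_n$, and estimates the length through the local expression (\ref{localmetrica}) together with a uniform bound on $\|\eta_t^{-1}\|$. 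Both approaches produce an explicit short curve; the paper's is tailored to the homogeneous-space structure (and dovetails with the later use of liftings in Theorem~\ref{compda}), while yours is a self-contained chart computation that avoids the section and the exponential map altogether.

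One small wording issue: since $\eta_t:L\to\gamma_n(t)$ is a map between \emph{different} subspaces, the expression ``$1-\eta_t$'' in your Neumann-series step is not literally defined. The cleanest fix is to note (as in the proof of Theorem~\ref{cartasOL_0}) that $\eta_t$ is the restriction to $L$ of the global operator $f_t=1-tJ|_L\psi_nP_L$ on $\mathcal{H}$, whose inverse is explicitly $1+tJ|_L\psi_nP_L$ (because $(J|_L\psi_nP_L)^2=0$); hence $\|\eta_t^{-1}\|\le\|f_t^{-1}\|\le 1+\|\psi_n\|\le 2$ for large $n$, uniformly in $t$. With that adjustment your bound $L_{\mathcal A}(\gamma_n)\le C^2\|\psi_n\|_2$ goes through exactly as written.
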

\begin{proof} Since the map $\pi_L$ has local continuous sections, let $n_0$ such that $L_n \in U \subset\mathcal{O}_{L_0} \ \forall n \geq n_0$ ($U$ a neighboord of $L$) and such that $\sigma_L : U \rightarrow {\rm Sp}_2(\mathcal{H})$ is a section for $\pi_L$. By continuity we have  $\sigma_L(L_n) \stackrel{\Vert . \Vert_2}\longrightarrow \sigma_L(L)=1$ if $n\geq n_0$. Since $\sigma_L(L_n)$ is close to $1$, there is $z_n \in \mathfrak{sp}_2(\mathcal{H})$ such that $\sigma_L(L_n)=e^{z_n}$ and since $\Vert e^{z_n} -1\Vert_2=\Vert \sigma_L(L_n) -1 \Vert_2\rightarrow 0$ we also have $\Vert z_n \Vert_2 \rightarrow 0$. Let $\gamma_n(t)=e^{tz_n}(L)\subset \mathcal{O}_{L_0}$ be a curve that joins $L$ and $L_n$; using the equality (\ref{full}) its length is $L_{\mathcal{A}}(\gamma_n)=\int_0^{1} \mathcal{A}(\gamma_n(t),\dot{\gamma}_n(t)) dt=\int_0^1\Vert \dot{\gamma}_n(t)P_{\gamma_n(t)}\Vert_2 $. Since $\gamma_n(t)=\pi_L\circ e^{tz_n}$ using the chain rule and Proposition \ref{difpiL} we have 
$$ \dot{\gamma}_n(t)=d_{e^{tz_n}}\pi_L(z_ne^{tz_n})=P_{e^{tz_n}(L)}Jz_n\vert_{e^{tz_n}(L)},$$then taking norm and using the symmetric property of the Frobenius norm ($\Vert xyz\Vert_2\leq \Vert x\Vert\Vert y\Vert_2\Vert z\Vert$) we have $$ \Vert \dot{\gamma}_n(t)P_{\gamma_n(t)}\Vert_2 =\Vert P_{e^{tz_n}(L)}Jz_nP_{e^{tz_n}(L)}\Vert_2\leq \Vert z_n\Vert_2.$$ Then it is clear that $d_{\mathcal{A}}(L_n,L)\leq L_{\mathcal{A}}(\gamma_n) \rightarrow 0$.
\end{proof} 
Given a smooth curve $\alpha$ in ${\rm Sp}_2(\mathcal{H})$ we can measure its length with the left or right invariant metric, depending on which identification of tangent spaces we use in the group. In \cite{Manuel} it was used the left one, hence their use the left invariant metric. The length of a curve using this metric is $L_{\mathcal{L}}(\alpha)=\int_0^1\Vert \alpha^{-1}\dot{\alpha}\Vert_2$. In this paper we will use the right identification of the tangent spaces, so we have to introduce the right invariant metric. Although formally equivalent this choice will make some completeness easier. Then the length of $\alpha$ is, $L_{\mathcal{R}}(\alpha)=\int_0^1\Vert \dot{\alpha}\alpha^{-1}\Vert_2$.

\begin{prop}\label{dldr} If $d_{\mathcal{L}}$ and $d_{\mathcal{R}}$ denote the geodesic distance with the left and right invariant metrics respectively then, $$d_{\mathcal{L}}(x^{-1},y^{-1})=d_{\mathcal{R}}(x,y)  \   \   \  \forall  \  x,y \in  {\rm Sp}_2(\mathcal{H}).$$
\end{prop}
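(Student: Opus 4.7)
The plan is to use the diffeomorphism $\iota:g\mapsto g^{-1}$ of ${\rm Sp}_2(\mathcal{H})$ (inversion is smooth in any Banach--Lie group) to set up a length-preserving bijection between curves joining $x,y$ measured with $L_{\mathcal R}$ and curves joining $x^{-1},y^{-1}$ measured with $L_{\mathcal L}$. Once that bijection is established, taking infimum of lengths on both sides will yield the claimed equality of geodesic distances.

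First I would pick a smooth curve $\alpha:[0,1]\to{\rm Sp}_2(\mathcal{H})$ with $\alpha(0)=x$ and $\alpha(1)=y$, and define $\beta(t):=\alpha(t)^{-1}$. Since ${\rm Sp}_2(\mathcal{H})$ is a group and inversion is smooth, $\beta$ is a smooth curve in ${\rm Sp}_2(\mathcal{H})$ joining $x^{-1}$ and $y^{-1}$. Differentiating the identity $\alpha(t)\beta(t)=1$ I get
$$\dot{\beta}(t)=-\alpha(t)^{-1}\dot{\alpha}(t)\alpha(t)^{-1},$$
so that
$$\beta(t)^{-1}\dot{\beta}(t)=-\alpha(t)\alpha(t)^{-1}\dot{\alpha}(t)\alpha(t)^{-1}=-\dot{\alpha}(t)\alpha(t)^{-1}.$$
Taking the Hilbert--Schmidt norm (which is insensitive to the minus sign) gives $\|\beta^{-1}\dot{\beta}\|_2=\|\dot{\alpha}\alpha^{-1}\|_2$ pointwise, hence
$$L_{\mathcal L}(\beta)=\int_0^1\|\beta^{-1}\dot{\beta}\|_2\,dt=\int_0^1\|\dot{\alpha}\alpha^{-1}\|_2\,dt=L_{\mathcal R}(\alpha).$$

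The final step is to observe that the map $\alpha\mapsto\alpha^{-1}$ is a bijection between smooth curves joining $x$ to $y$ and smooth curves joining $x^{-1}$ to $y^{-1}$ (its inverse being the same operation applied to $\beta$). Passing to the infimum over such curves on both sides yields $d_{\mathcal R}(x,y)=d_{\mathcal L}(x^{-1},y^{-1})$. There is no real obstacle here; the only things to be careful about are that $\iota$ genuinely preserves the class of admissible (piecewise smooth) curves used in the definition of the geodesic distance and that $\beta$ remains in ${\rm Sp}_2(\mathcal{H})$, both of which are immediate from the Banach--Lie group structure established earlier.
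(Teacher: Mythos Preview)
Your argument is correct and is essentially the paper's own proof: the paper also inverts a curve and observes that $\dot\beta\beta^{-1}=-\alpha^{-1}\dot\alpha$ (equivalently your $\beta^{-1}\dot\beta=-\dot\alpha\alpha^{-1}$), so the left length of one equals the right length of the other. The only cosmetic difference is that the paper first reduces to the case $y=1$ via the left/right invariance of the two distances, whereas you handle arbitrary $x,y$ directly; this changes nothing substantive.
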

\begin{proof} Since the geodesic distances are left and right invariant respectively, the only fact left to prove is the equality $d_{\mathcal{L}}(x^{-1},1)=d_{\mathcal{R}}(x,1)$ for all $x \in {\rm Sp}_2(\mathcal{H})$. Indeed, if $\alpha$ is any curve that joins $1$ to $x^{-1}$ then the curve $\beta(t)=\alpha(t)^{-1}$ joins $1$ to $x$; if we derive we have $\dot{\beta}(t)\beta(t)^{-1}=-\alpha(t)^{-1}\dot{\alpha}(t)$ and then the right length of $\beta$ coincides with the left length of $\alpha$.
\end{proof}
If $\xi : [0,1] \rightarrow \mathcal{O}_{L_0}$ is a curve with $\xi(0)=L$ then a lifting of $\xi$ is a map $\phi : [0,1] \rightarrow {\rm Sp}_2(\mathcal{H})$ with $\phi(0)=1$ and $\phi(t)(L)=\xi(t)$, for all $t \in [0,1]$. The next lemma is an adaptation of Lemma 25 in \cite{Piccione1}. 

\begin{lem}\label{levantadas} Every smooth curve $\xi : [0,1] \rightarrow \mathcal{O}_{L_0}$ with $\xi(0)=L$ admits an isometric lifting, if we consider the right invariant metric in ${\rm Sp}_2(\mathcal{H})$.
\end{lem}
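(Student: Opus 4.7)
The plan is to construct $\phi$ by solving a linear ODE in ${\rm Sp}_2(\mathcal{H})$ whose generator at each time is a canonical minimal-norm preimage of $\dot{\xi}(t)$ under the submersion given in Proposition \ref{difpiL}. First, for every $W\in\mathcal{O}_{L_0}$ and every tangent vector $v\in T_W\mathcal{O}_{L_0}=\mathcal{B}_2(W)_s$, I will verify that $X_{W,v}:=-J(P_WvP_W)$ lies in $\mathfrak{sp}_2(\mathcal{H})$ (a short check using $J^*=-J$ and the self-adjointness of $P_WvP_W$); that it projects down correctly, i.e. $P_WJX_{W,v}\vert_W=v$ (immediate from $J^2=-1$ and the formula of Proposition \ref{difpiL} applied to $\pi_W$ at the identity); and that $\Vert X_{W,v}\Vert_2=\Vert P_WvP_W\Vert_2=\Vert vP_W\Vert_2=\mathcal{A}(W,v)$, since $J$ is an isometry. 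So $X_{W,v}$ is a norm-minimal lift of $v$.

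Next I set $X(t):=-JP_{\xi(t)}\dot{\xi}(t)P_{\xi(t)}$, which is a smooth curve in $\mathfrak{sp}_2(\mathcal{H})$, and solve the linear ODE $\dot{\phi}(t)=X(t)\phi(t)$ with $\phi(0)=1$. Writing $\phi=1+\psi$ with $\psi\in\mathcal{B}_2(\mathcal{H})$ converts this into a standard linear ODE in the Banach space $\mathcal{B}_2(\mathcal{H})$, so Picard iteration yields a unique smooth global solution on $[0,1]$. Since $X(t)\in\mathfrak{sp}(\mathcal{H})$, the quantity $\phi(t)^*J\phi(t)$ has vanishing derivative, so it is constantly equal to $J$, which places $\phi(t)$ in ${\rm Sp}(\mathcal{H})\cap(1+\mathcal{B}_2(\mathcal{H}))={\rm Sp}_2(\mathcal{H})$.

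The main obstacle is verifying the lifting identity $\phi(t)(L)=\xi(t)$, and the cleanest route is to introduce the auxiliary curve $\lambda(t):=\phi(t)^{-1}(\xi(t))\in\mathcal{O}_{L_0}$, which by construction satisfies $\phi(t)(\lambda(t))=\xi(t)$ and $\lambda(0)=L$; it is enough to prove $\dot{\lambda}(t)=0$. Differentiating $\phi(s)(\lambda(t))$ by the chain rule for the smooth action of ${\rm Sp}_2(\mathcal{H})$ on $\mathcal{O}_{L_0}$, the partial $s$-derivative is the fundamental vector field of $X(t)$ evaluated at $\xi(t)$, which by Proposition \ref{difpiL} is $P_{\xi(t)}JX(t)\vert_{\xi(t)}$; a direct computation using $JX(t)=P_{\xi(t)}\dot{\xi}(t)P_{\xi(t)}$ and $P_{\xi(t)}\vert_{\xi(t)}=1_{\xi(t)}$ reduces this to $\dot{\xi}(t)$. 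Hence the remaining term $(d\phi(t))_{\lambda(t)}(\dot{\lambda}(t))$ vanishes, and since $\phi(t)$ acts as a diffeomorphism on $\mathcal{O}_{L_0}$, $\dot{\lambda}(t)=0$ and $\lambda\equiv L$, giving $\phi(t)(L)=\xi(t)$. The isometry claim is then immediate from the first step: $\Vert\dot{\phi}(t)\phi(t)^{-1}\Vert_2=\Vert X(t)\Vert_2=\mathcal{A}(\xi(t),\dot{\xi}(t))$, so $L_{\mathcal{R}}(\phi)=L_{\mathcal{A}}(\xi)$ and $\phi$ is the desired isometric lifting.
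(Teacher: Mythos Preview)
Your proof is correct and follows essentially the same approach as the paper: you set $X(t)=-J\dot{\xi}(t)P_{\xi(t)}$ (your extra left factor $P_{\xi(t)}$ is just the implicit inclusion $\xi(t)\hookrightarrow\mathcal{H}$, so the operators coincide), solve the linear ODE $\dot{\phi}=X\phi$ with $\phi(0)=1$, and read off the isometry from $\Vert X(t)\Vert_2=\mathcal{A}(\xi(t),\dot{\xi}(t))$. The only cosmetic difference is in verifying $\phi(t)(L)=\xi(t)$: the paper invokes uniqueness of integral curves of the time-dependent vector field $W\mapsto P_WJX(t)\vert_W$, whereas you differentiate $\lambda(t)=\phi(t)^{-1}(\xi(t))$ and show $\dot\lambda=0$; these are two phrasings of the same uniqueness argument, and you supply extra detail (ODE well-posedness in $\mathcal{B}_2$, the check that $\phi(t)^*J\phi(t)\equiv J$) that the paper leaves implicit.
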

\begin{proof} For each $t\in [0,1]$, set $X(t)=-J\dot{\xi}(t)P_{\xi(t)} \in \mathfrak{sp}_2(\mathcal{H})$ and consider the solution of the ODE 
\begin{equation}\left\{ \begin{array}{lcl}\label{lev}
\dot{\phi}(t)=X(t)\phi(t) \\
& & \\
\phi(0)=1
\end{array}
\right.
\end{equation}
A simple computation using Proposition \ref{difpiL} shows that both $t\mapsto \phi(t)(L)$ and $\xi(t)$ are integral curves of the vector field $\nu(t)(L)=P_LJX(t)\vert_L \in T_L\mathcal{O}_{L_0}=\mathcal{B}_2(L)_s$ both starting at $L$, therefore the two curves coincide. Now, it is easy to see that the solution of the differential equation (\ref{lev}) is an isometric lifting of $\xi$. Indeed, if we take norms in the equation 
 we have, $$\Vert\dot{\phi}(t)\phi^{-1}(t)\Vert_2=\Vert-J\dot{\xi}(t)P_{\xi(t)}\Vert_2=\Vert\dot{\xi}(t)P_{\xi(t)}\Vert_2=\mathcal{A}(\xi(t),\dot{\xi}(t)).$$
\end{proof}

\begin{teo}\label{compda} The metric space $(\mathcal{O}_{L_0}, d_{\mathcal{A}})$ is complete.
\end{teo}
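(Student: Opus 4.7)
The strategy is to lift a $d_{\mathcal{A}}$-Cauchy sequence to the group $\mathrm{Sp}_2(\mathcal{H})$, invoke completeness there for the \emph{right} invariant distance, and then project the limit back down to $\mathcal{O}_{L_0}$. Let $(W_n)\subset\mathcal{O}_{L_0}$ be $d_{\mathcal{A}}$-Cauchy; after passing to a subsequence assume $d_{\mathcal{A}}(W_n,W_{n+1})<2^{-n}$. For each $n$ pick a smooth curve $\xi_n$ from $W_n$ to $W_{n+1}$ with $L_{\mathcal{A}}(\xi_n)<2^{-n+1}$, and use Lemma \ref{levantadas} to obtain an isometric lifting $\phi_n:[0,1]\to\mathrm{Sp}_2(\mathcal{H})$ with $\phi_n(0)=1$ and $\phi_n(t)(W_n)=\xi_n(t)$. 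Setting $h_n:=\phi_n(1)$, we have $h_n(W_n)=W_{n+1}$ and $d_{\mathcal{R}}(h_n,1)\le L_{\mathcal{R}}(\phi_n)=L_{\mathcal{A}}(\xi_n)<2^{-n+1}$.

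Next, fix some $g_0\in\mathrm{Sp}_2(\mathcal{H})$ with $g_0(L_0)=W_0$ and define inductively $g_{n+1}:=h_ng_n$, so $g_n(L_0)=W_n$. By right invariance,
$$d_{\mathcal{R}}(g_{n+1},g_n)=d_{\mathcal{R}}(h_ng_n,g_n)=d_{\mathcal{R}}(h_n,1)<2^{-n+1},$$
and therefore $(g_n)$ is Cauchy in $d_{\mathcal{R}}$. The completeness of $(\mathrm{Sp}_2(\mathcal{H}),d_{\mathcal{L}})$ proved in \cite{Manuel} combined with Proposition \ref{dldr} (which, via the homeomorphism $x\mapsto x^{-1}$, transfers completeness from the left to the right invariant distance) produces $g\in\mathrm{Sp}_2(\mathcal{H})$ with $g_n\to g$ in $d_{\mathcal{R}}$.

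It remains to check that $W_n=g_n(L_0)\to g(L_0)$ in $d_{\mathcal{A}}$. Given $\varepsilon>0$, for each $n$ choose a smooth curve $\beta_n$ in $\mathrm{Sp}_2(\mathcal{H})$ from $g_n$ to $g$ with $L_{\mathcal{R}}(\beta_n)\le d_{\mathcal{R}}(g_n,g)+\varepsilon$. The projected curve $\gamma_n(t):=\beta_n(t)(L_0)$ joins $W_n$ to $g(L_0)$ inside $\mathcal{O}_{L_0}$; writing $X(t)=\dot\beta_n(t)\beta_n(t)^{-1}\in\mathfrak{sp}_2(\mathcal{H})$, Proposition \ref{difpiL} gives $\dot\gamma_n(t)=P_{\gamma_n(t)}JX(t)|_{\gamma_n(t)}$, so that
$$\mathcal{A}(\gamma_n(t),\dot\gamma_n(t))=\|P_{\gamma_n(t)}JX(t)P_{\gamma_n(t)}\|_2\le\|X(t)\|_2=\|\dot\beta_n(t)\beta_n(t)^{-1}\|_2,$$
using that $\|P\|\le 1$ and $J$ is isometric. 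Integrating yields $d_{\mathcal{A}}(W_n,g(L_0))\le L_{\mathcal{R}}(\beta_n)\le d_{\mathcal{R}}(g_n,g)+\varepsilon\to\varepsilon$, and since $\varepsilon$ was arbitrary we conclude $W_n\to g(L_0)$. The delicate ingredients are the choice of the right invariant metric, which is precisely what makes the telescoping $d_{\mathcal{R}}(g_{n+1},g_n)=d_{\mathcal{R}}(h_n,1)$ fit the base-point convention of Lemma \ref{levantadas}, and the contraction estimate for $\pi_{L_0}$, which rests on the symmetric-norm inequality $\|PJXP\|_2\le\|X\|_2$ for the Hilbert-Schmidt ideal.
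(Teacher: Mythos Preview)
Your argument is correct and follows essentially the same route as the paper: lift to $\mathrm{Sp}_2(\mathcal{H})$ via the isometric lifting Lemma \ref{levantadas}, invoke completeness there (transferred from $d_{\mathcal{L}}$ to $d_{\mathcal{R}}$ through Proposition \ref{dldr}), and project the limit back to $\mathcal{O}_{L_0}$. The only differences are organizational---you telescope successive lifts $h_n$ and project curves directly to obtain $d_{\mathcal{A}}$-convergence, whereas the paper lifts everything from a fixed base point $L_{n_0}$ and passes through the manifold topology together with Proposition \ref{convda}; just add the one-line remark that convergence of your subsequence forces convergence of the full Cauchy sequence.
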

\begin{proof} Let $(L_n)$ be a $d_{\mathcal{A}}$-Cauchy sequence in $\mathcal{O}_{L_0}$ and fix $\varepsilon>0$. Then there exists $n_0$ such that $d_{\mathcal{A}}(L_n,L_m)\leq \varepsilon$ if $n,m \geq n_0$. For the fixed Lagrangian ${L_{n_0}}$, we have the map $$\pi=\pi_{L_{n_0}}: {\rm Sp}_2(\mathcal{H}) \rightarrow \mathcal{O}_{L_0} , \ \ \pi(g)=g(L_{n_0}).$$ If $n,m \geq n_0$ we can take a curve $\gamma_{n,m}\subset \mathcal{O}_{L_0}$ that joins $L_n$ to $L_m$ (for $t=0$ and $t=1$ respectively) such that $$L_{\mathcal{A}}(\gamma_{n,m})\leq d_{\mathcal{A}}(L_n,L_m)+\varepsilon.$$
Then by Lemma \ref{levantadas}, the curves $\gamma_{n_0,m}$ are lifted, via $\pi$, to curves $\phi_m$ of ${\rm Sp}_2(\mathcal{H})$ with $\phi_m(0)=1$ and $ L_{\mathcal{R}}(\phi_m)=L_{\mathcal{A}}(\gamma_{n_0,m})$. Denote by $g_m=\phi_m(1)\subset{\rm Sp}_2(\mathcal{H})$ the end point. Then 
$$\varepsilon+d_{\mathcal{A}}(L_{n_0},L_m)\geq L_{\mathcal{A}}(\gamma_{n_0,m})=L_{\mathcal{R}}(\phi_m)\geq d_{\mathcal{R}}(1,g_m).$$ 
For each $n,m \geq n_0$ we have, $$d_{\mathcal{R}}(g_n,g_m)\leq d_{\mathcal{R}}(1,g_m)+d_{\mathcal{R}}(1,g_n)\leq 2\varepsilon+ d_{\mathcal{A}}(L_{n_0},L_m)+d_{\mathcal{A}}(L_{n_0},L_n)\leq 4\varepsilon.$$ Thus the sequence $(g_m)\subset {\rm Sp}_2(\mathcal{H})$ is $d_{\mathcal{R}}$-Cauchy and then by Proposition \ref{dldr} we have that $(g_m^{-1})$ is $d_{\mathcal{L}}$-Cauchy. Using Lemma 7.1 of \cite{Manuel} we have that the sequence $(g_m^{-1})$ is a Cauchy sequence in $({\rm Sp}_2(\mathcal{H}),\Vert . \Vert_2)$ and then since this metric space is closed, there exists $x \in {\rm Sp}_2(\mathcal{H})$ such that $g_m^{-1} \stackrel{\Vert. \Vert_2}\longrightarrow x$. By continuity we have $\pi(g_m) \stackrel{\mathcal{O}_{L_0}}\longrightarrow \pi(x^{-1})$ and since $\phi_m$ is a lift of $\gamma_{n_0,m}$  we also have $\pi(g_m)=g_m(L_{n_0})=\phi_m(1)(L_{n_0})=\gamma_{n_0,m}(1)=L_m$, so $L_m \stackrel{\mathcal{O}_{L_0}}\longrightarrow \pi(x^{-1})$. Thus using Lemma \ref{convda} we have $d_{\mathcal{A}}(L_m,\pi(x^{-1}))\rightarrow 0$.
\end{proof}

The Riemannian connection given by the left invariant metric in the group ${\rm Sp}_2(\mathcal{H})$ was calculated in \cite{Manuel}. There was proved that the Riemannian connection in the group ${\rm Sp}_2(\mathcal{H})$ matches it the one of $GL_2(\mathcal{H})$; the group of invertible operators which are pertubations of the identity by a Hilbert-Schmidt operator. If $g_0 \in {\rm Sp}_2(\mathcal{H})$ and $g_0v_0 \in g_0.\mathfrak{sp}_2(\mathcal{H})$ are the initial position and the initial velocity then  $$\alpha(t)=g_0e^{tv_0^{*}}e^{t(v_0-v_0^{*})}\subset {\rm Sp}_2(\mathcal{H})$$ is a geodesic of the Riemannian connection. This fact can be used to find the geodesic of the Riemannian connection induced by the ambient metric $\mathcal{A}$.

\begin{teo} Let $\xi : [0,1] \rightarrow \mathcal{O}_{L_0}$ be a geodesic curve of the Riemannian connection induced by the ambient metric $\mathcal{A}$ with initial position $\xi(0)=L$ and initial velocity $\dot{\xi}(0)=w \in T_{\xi(0)} \mathcal{O}_{L_0}=\mathcal{B}_2(L)_s$. Then $$\xi(t)=e^{t(v^*-v)}e^{-tv^*}(L)$$ where $v\in \mathfrak{sp}_2(\mathcal{H})$ is a preimage of $-w$ by $d_1\pi_L$.
\end{teo}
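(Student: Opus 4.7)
The plan is to realize $\xi$ as the base projection under $\pi_L$ of a right-invariant geodesic in $\mathrm{Sp}_2(\mathcal{H})$, then to invoke Riemannian submersion ideas. First I would recognize $\phi(t):=e^{t(v^*-v)}e^{-tv^*}$ as the right-invariant geodesic through $1$ with initial velocity $-v$. Indeed, from \cite{Manuel} the left-invariant geodesic starting at $1$ with velocity $u$ is $\alpha_u(t)=e^{tu^*}e^{t(u-u^*)}$, and by the inversion anti-isometry underlying Proposition \ref{dldr} the curve $\alpha_v(t)^{-1}=e^{-t(v-v^*)}e^{-tv^*}=e^{t(v^*-v)}e^{-tv^*}=\phi(t)$ is a right-invariant geodesic; differentiating at $0$ gives $\dot\phi(0)=(v^*-v)+(-v^*)=-v$.

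Second, the initial data of $\xi=\pi_L\circ\phi$ are immediate: $\xi(0)=\phi(0)(L)=L$ and, by Proposition \ref{difpiL} together with the chain rule,
\[
\dot\xi(0)=d_1\pi_L(\dot\phi(0))=d_1\pi_L(-v)=-d_1\pi_L(v)=w,
\]
the last step using that $v$ is a preimage of $-w$.

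Third, I would argue that $\xi$ is a geodesic for the Riemannian connection induced by $\mathcal{A}$ because $\pi_L:(\mathrm{Sp}_2(\mathcal{H}),\mathcal{R})\to(\mathcal{O}_{L_0},\mathcal{A})$ is a Riemannian submersion. This is essentially the content of Lemma \ref{levantadas}: the generator $X(t)=-J\dot\xi(t)P_{\xi(t)}$ of the canonical lift is horizontal --- a short block-matrix verification using characterization \eqref{lieiso} shows $\langle X(t),Y\rangle_2=0$ for every $Y\in\mathfrak{sp}_2(\mathcal{H})_L$ --- and the norm equality stated in the lemma is precisely the submersion condition on horizontal vectors. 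Since horizontal geodesics of a Riemannian submersion project to geodesics in the base, provided $-v$ is horizontal (that is, $v$ is chosen as the unique preimage of $-w$ lying in $\mathfrak{sp}_2(\mathcal{H})_L^{\perp}$), the curve $\xi$ is the desired geodesic.

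The main obstacle will be justifying the descent of horizontal geodesics in this Hilbert-manifold setting, together with clarifying the role of the preimage: a priori the formula depends on which preimage of $-w$ is chosen, since two preimages differing by $Y\in\mathfrak{sp}_2(\mathcal{H})_L$ generally produce different curves (as one can verify already in low dimension, where the non-horizontal choice yields a hyperbolic rather than circular motion). The intended interpretation is therefore the horizontal one. An alternative, more computational route would be to calculate the Levi--Civita connection of $\mathcal{A}$ directly and verify $\nabla_{\dot\xi}\dot\xi=0$ by differentiating the explicit formula, bypassing submersion theory altogether at the cost of a longer calculation involving the derivatives of $P_{\xi(t)}$.
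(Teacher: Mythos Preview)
Your approach is correct in spirit but runs in the opposite direction from the paper's. The paper starts from the hypothesis that $\xi$ is a geodesic, hence locally minimizing; it then invokes Lemma \ref{levantadas} to obtain an isometric lift $\phi$, argues that $\phi$ must itself be locally minimizing for the right-invariant metric (because projecting any competitor can only shorten it), applies the inversion anti-isometry of Proposition \ref{dldr} to conclude that $\phi^{-1}$ is locally minimizing for the left metric and therefore a left-invariant geodesic, and reads off the formula $\phi^{-1}(t)=e^{tv^*}e^{t(v-v^*)}$ from \cite{Manuel}. You instead construct the candidate curve first and push it down via Riemannian-submersion theory. Both routes are valid; the paper's buys you a shorter argument that avoids setting up the full submersion framework in infinite dimensions (the obstacle you yourself flag), while yours is more conceptual and makes the role of horizontality explicit.

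Two points to tighten. First, since the theorem is stated as ``given a geodesic $\xi$, it has this form,'' your forward construction needs a final appeal to uniqueness of geodesics with prescribed initial data to identify the given $\xi$ with $\pi_L\circ\phi$; you should say this explicitly. Second, your remark about the preimage is well taken: in the paper's argument the $v$ that appears is not arbitrary but is forced by the canonical lift of Lemma \ref{levantadas}, namely $v=-\dot\phi(0)=-X(0)=J w P_L$, which is indeed the horizontal preimage. So the apparent ambiguity in the statement is resolved the same way in both approaches, and your diagnosis that a non-horizontal choice would spoil the formula is correct.
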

\begin{proof} Since $\xi$ is a geodesic curve, by general considerations of Riemannian theory, it is locally minimizing. Using the Lemma \ref{levantadas} there exists an isometric lifting $\phi\subset {\rm Sp}_2(\mathcal{H})$ with initial condition $\phi(0)=1$. By the isometric property $\phi$ results locally minimizing with the right invariant metric and then $\phi^{-1}$ results locally minimizing with the left invariant metric. Hence the curve $\phi^{-1} \subset {\rm Sp}_2(\mathcal{H})$ is a geodesic and it is $\phi^{-1}(t)=e^{tv^{*}}e^{t(v-v^{*})}$ for some $v\in \mathfrak{sp}_2(\mathcal{H})$. Then it is clear that $\phi(t)=e^{t(v^*-v)}e^{-tv^*}$ and $\xi(t)=e^{t(v^*-v)}e^{-tv^*}(L)$. The only fact left to prove is that $v$ is a lift of $-w$. Indeed, since $\dot{\xi}(t)=d_{e^{t(v^*-v)}e^{-tv^*}}\pi_L\big((v^*-v)e^{t(v^*-v)}e^{-tv^*}-e^{t(v^*-v)}e^{-tv^*}v^*\big)$, then $w=\dot{\xi}(0)=d_1\pi_L(-v)=-d_1\pi_L(v)$.       
\end{proof}

\subsection{The quotient metric}
If $W \in \mathcal{O}_{L_0}$ and $v \in  T_W \mathcal{O}_{L_0}$, we put $$\mathcal{Q }(W,v)=\inf \lbrace \Vert z\Vert_2: z\in  \mathfrak{sp}_2(\mathcal{H}), \   d_1\pi_W(z)=v\rbrace.$$ 
This metric will be called the quotient metric of $\mathcal{O}_{L_0}$, because it is the quotient metric in the Banach space $$T_W \mathcal{O}_{L_0}    \simeq \mathfrak{sp}_2(\mathcal{H})/\mathfrak{sp}_2(\mathcal{H})_W.$$ Indeed, since $\mathfrak{sp}_2(\mathcal{H})_W=\ker d_1\pi_W$, if $z\in  \mathfrak{sp}_2(\mathcal{H})$ with   $d_1\pi_W(z)=v$ then $$\mathcal{Q }(W,v)=\inf \lbrace \Vert z-y\Vert_2 : y \in \mathfrak{sp}_2(\mathcal{H})_W\rbrace.$$ If $Q_L$ denotes the orthogonal projection onto $\mathfrak{sp}_2(\mathcal{H})_W$ then each $z\in \mathfrak{sp}_2(\mathcal{H})$ can be uniquely decomposed as $$z=z-Q_L(z)+Q_L(z)=z_0+Q_L(z)$$ hence $$\Vert z-y\Vert_2^2=\Vert z_0+Q_L(z)-y\Vert_2^2=\Vert z_0\Vert_2^2+\Vert Q_L(z)-y\Vert_2^2\geq \Vert z_0\Vert_2^2$$ for any $y \in \mathfrak{sp}_2(\mathcal{H})_W$ which shows that 
\begin{equation}\label{almin} \mathcal{Q }(W,v)=\Vert z_0\Vert_2 \end{equation}
where $z_0$ is the unique vector in ${\mathfrak{sp}_2(\mathcal{H})}_W^{\perp}$ such that $d_1\pi_W(z_0)=v$.

We denote the length for a piecewise smooth curve in $\mathcal{O}_{L_0}$, measured with the quotient norm introduced above, $$L_{Q}(\gamma)=\int_0^1 \mathcal{Q }(\gamma(t),\dot{\gamma}(t)) dt$$ 
and by $d_{\mathcal{Q}}$ the geodesic distance in $\mathcal{O}_{L_0}$  
$$d_{\mathcal{Q}}(S,T)=\inf \lbrace L_{\mathcal{Q}}(\gamma) : \gamma \ \mbox{joins} \ S  \ \mbox{and} \  T  \ \mbox{in} \ \mathcal{O}_{L_0}\rbrace.$$

\begin{prop}\label{desdadq} $d_{\mathcal{A}}(S,T)\leq d_{\mathcal{Q}}(S,T)$ for all $S,T\in \mathcal{O}_{L_0}$. 
\end{prop}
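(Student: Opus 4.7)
The plan is to reduce the inequality to a pointwise comparison of the infinitesimal norms $\mathcal{A}(W,\cdot)$ and $\mathcal{Q}(W,\cdot)$ on each tangent space $T_W\mathcal{O}_{L_0}$, then integrate along curves and pass to the infimum.

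First I would observe that it suffices to prove $\mathcal{A}(W,v)\leq \mathcal{Q}(W,v)$ for every $W\in \mathcal{O}_{L_0}$ and every $v\in T_W\mathcal{O}_{L_0}=\mathcal{B}_2(W)_s$: once this pointwise bound is in hand, integrating along any piecewise smooth curve $\gamma$ joining $S$ to $T$ gives $L_{\mathcal{A}}(\gamma)\leq L_{\mathcal{Q}}(\gamma)$, and taking infimum over such $\gamma$ yields $d_{\mathcal{A}}(S,T)\leq d_{\mathcal{Q}}(S,T)$.

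To prove the pointwise inequality, fix $W$ and $v$, and take any $z\in \mathfrak{sp}_2(\mathcal{H})$ with $d_1\pi_W(z)=v$. By Proposition \ref{difpiL}, $v=P_WJz\vert_W$ as an element of $\mathcal{B}_2(W)_s$; extending $v$ by $0$ on $W^{\perp}$, this identifies the operator $vP_W\in \mathcal{B}_2(\mathcal{H})$ with $P_WJzP_W$. Combining the formula $\mathcal{A}(W,v)=\Vert vP_W\Vert_2$ (equation (\ref{full})) with the standard bimodule property $\Vert axb\Vert_2\leq \Vert a\Vert \Vert x\Vert_2 \Vert b\Vert$ of the Hilbert--Schmidt norm, and using $\Vert P_W\Vert=\Vert J\Vert=1$, I obtain
\begin{equation*}
\mathcal{A}(W,v)=\Vert P_WJzP_W\Vert_2\leq \Vert P_W\Vert\,\Vert J\Vert\,\Vert z\Vert_2\,\Vert P_W\Vert=\Vert z\Vert_2.
\end{equation*}
Since $z$ was an arbitrary preimage of $v$ under $d_1\pi_W$, taking infimum on the right gives exactly $\mathcal{A}(W,v)\leq \mathcal{Q}(W,v)$.

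The argument is essentially structural, so there is no real obstacle to overcome; the only point that requires a little care is the identification between the intrinsic object $v\in \mathcal{B}_2(W)_s$ and its extension-by-zero $vP_W\in \mathcal{B}_2(\mathcal{H})$, which is needed in order to apply the ambient Hilbert--Schmidt ideal bound to the operator $P_WJzP_W$ on the full Hilbert space $\mathcal{H}$. Once this identification is made explicit, the inequality reduces to the ideal property of $\mathcal{B}_2(\mathcal{H})$ together with the fact that $P_W$ and $J$ are contractions.
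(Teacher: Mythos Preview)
Your proposal is correct and follows essentially the same approach as the paper: both reduce to the pointwise inequality $\mathcal{A}(W,v)=\Vert P_WJzP_W\Vert_2\leq \Vert z\Vert_2$ via the ideal property of the Hilbert--Schmidt norm, then pass to curves and infima. The only cosmetic difference is that the paper works directly along a curve with the minimizing lift $\alpha$, whereas you fix an arbitrary preimage $z$ and take the infimum explicitly; your added remark about the identification $v\leftrightarrow vP_W$ makes the step $\Vert \dot\gamma P_\gamma\Vert_2=\Vert P_\gamma J\alpha P_\gamma\Vert_2$ slightly more transparent than in the paper.
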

\begin{proof} The proof is a straighforward computation using the definition of the metrics; indeed let $\gamma$ be any curve that joins $S$ with $T$, since $\mathcal{Q}(\gamma(t),\dot{\gamma}(t))=\Vert \alpha(t)\Vert_2$ where $d_1\pi_\gamma(\alpha)=\dot{\gamma}$ then, $$\mathcal{A}(\gamma,\dot{\gamma})=\Vert \dot{\gamma}P_{\gamma}\Vert_2=\Vert d_1\pi_\gamma(\alpha)P_{\gamma}\Vert_2=\Vert P_{\gamma}J\alpha\vert_{\gamma}P_{\gamma}\Vert_2\leq \Vert \alpha\Vert_2=\mathcal{Q}(\gamma,\dot{\gamma}).$$
\end{proof}
 
\begin{lem} Let $(L_n)\subset \mathcal{O}_{L_0}$ such that $L_n \stackrel{\mathcal{O}_{L_0}}\rightarrow L $; then $L_n \stackrel{d_{\mathcal{Q}}}\longrightarrow L.$
\end{lem}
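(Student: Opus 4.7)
The plan is to follow exactly the blueprint of Proposition \ref{convda}, using the local cross section of $\pi_L$ to construct an explicit connecting curve and bounding its $\mathcal{Q}$-length rather than its $\mathcal{A}$-length. Since $\pi_L$ admits a smooth local section $\sigma_L$ on a neighborhood $U$ of $L$ with $\sigma_L(L)=1$, continuity in the manifold topology gives $\sigma_L(L_n)\to 1$ in $\Vert\cdot\Vert_2$ for large $n$. Then $\sigma_L(L_n)=e^{z_n}$ for some $z_n\in\mathfrak{sp}_2(\mathcal{H})$ with $\Vert z_n\Vert_2\to 0$, and I would propose the connecting curves $\gamma_n(t)=e^{tz_n}(L)\subset\mathcal{O}_{L_0}$, which satisfy $\gamma_n(0)=L$ and $\gamma_n(1)=L_n$.

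The essential computation is identifying, for each $t$, an explicit preimage of $\dot{\gamma}_n(t)$ under $d_1\pi_{\gamma_n(t)}$ whose $2$-norm is controlled. Writing $\gamma_n(t+s)=e^{sz_n}(\gamma_n(t))=\pi_{\gamma_n(t)}(e^{sz_n})$ and differentiating at $s=0$ gives
\begin{equation*}
\dot{\gamma}_n(t)=d_1\pi_{\gamma_n(t)}(z_n),
\end{equation*}
so $z_n$ itself is a preimage of $\dot{\gamma}_n(t)$ for every $t$. By definition of the quotient metric this yields $\mathcal{Q}(\gamma_n(t),\dot{\gamma}_n(t))\leq \Vert z_n\Vert_2$, independent of $t$.

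Integrating, $L_{\mathcal{Q}}(\gamma_n)\leq \Vert z_n\Vert_2$, and therefore $d_{\mathcal{Q}}(L_n,L)\leq L_{\mathcal{Q}}(\gamma_n)\to 0$, as required. I do not expect any real obstacle here: the only point that needs a careful check is the identity $\dot{\gamma}_n(t)=d_1\pi_{\gamma_n(t)}(z_n)$, which is just the standard one-parameter subgroup computation using that $\pi_L(gh)=g(\pi_L(h))$ and Proposition \ref{difpiL}. Everything else (existence of $z_n$, the convergence $\Vert z_n\Vert_2\to 0$, and smoothness of the section) is already provided by the earlier results.
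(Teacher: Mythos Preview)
Your proof is correct and follows the same overall strategy as the paper: build the same curves $\gamma_n(t)=e^{tz_n}(L)$ from the local section and bound $\mathcal{Q}(\gamma_n,\dot{\gamma}_n)$ by $\Vert z_n\Vert_2$. The difference lies in how that bound is obtained. The paper appeals to formula (\ref{almin}) and works with the \emph{minimal} lift $x_n(t)\in\mathfrak{sp}_2(\mathcal{H})_{\gamma_n(t)}^{\perp}$, then uses the Lagrangian projector identity $JP_W+P_WJ=J$ to write $Jx_n(t)=P_{\gamma_n(t)}Jx_n(t)P_{\gamma_n(t)}$ and compare it, via (\ref{znxn}), to the compression of $Jz_n$; this gives $\Vert x_n(t)\Vert_2\le\Vert z_n\Vert_2$. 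You bypass all of that by observing that $z_n$ is itself a lift of $\dot{\gamma}_n(t)$ under $d_1\pi_{\gamma_n(t)}$ (from the one-parameter group identity $e^{(t+s)z_n}=e^{sz_n}e^{tz_n}$), so the infimum in the definition of $\mathcal{Q}$ immediately yields $\mathcal{Q}(\gamma_n(t),\dot{\gamma}_n(t))\le\Vert z_n\Vert_2$. Your route is shorter and more elementary; the paper's route, while longer, exhibits the minimal lift explicitly and uses the Lagrangian structure, which is informative but unnecessary for the statement at hand.
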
 
\begin{proof} The proof is similar to that of Proposition \ref{convda}. Since the map $\pi_L$ has local continuous section, let $n_0$ such that $L_n \in U \subset\mathcal{O}_{L_0} \ \forall n \geq n_0$ ( $U$ a neighboord of $L$) and such that $\sigma_L : U \rightarrow {\rm Sp}_2(\mathcal{H})$ is a section for $\pi_L$. By continuity we have  $\sigma_L(L_n) \stackrel{\Vert . \Vert_2}\longrightarrow \sigma_L(L)=1$ if $n\geq n_0$. Since $\sigma_L(L_n)$ is close to $1$, there is $z_n \in \mathfrak{sp}_2(\mathcal{H})$ such that $\sigma_L(L_n)=e^{z_n}$ and since $\Vert e^{z_n} -1\Vert_2=\Vert \sigma_L(L_n) -1 \Vert_2\rightarrow 0$ we also have $\Vert z_n \Vert_2 \rightarrow 0$. Let $\gamma_n(t)=e^{tz_n}(L)\subset \mathcal{O}_{L_0}$ a curve that joins $L$ and $L_n$. By the formula (\ref{almin}), $$Q(\gamma_n,\dot{\gamma}_n)=\Vert x_n(t)\Vert_2$$ where $x_n(t)$ is the unique vector in ${\mathfrak{sp}_2(\mathcal{H})}_{\gamma_n(t)}^{\perp}$ such that $d_1\pi_{\gamma_n(t)}(x_n(t))=\dot{\gamma_n}(t)$. Then using the chain rule and  Proposition \ref{difpiL} the above equality shows 
\begin{equation}\label{znxn} P_{e^{tz_n}(L)}Jx_n(t)\vert_{e^{tz_n}(L)}=P_{e^{tz_n}(L)}Jz_n\vert_{e^{tz_n}(L)}.
\end{equation} 
This means that the compression of $Jx_n(t)$ to the 1-1 position block in the decomposition $e^{tz_n}(L)\oplus e^{tz_n}(L)^{\perp}$ is equal to the compression of $Jz_n$ for all $n,t$. Since $x_n(t)$ belongs in ${\mathfrak{sp}_2(\mathcal{H})}_{e^{tz_n}(L)}^{\perp}$, we can write it  $$x_n(t)=x_n(t)P_{e^{tz_n}(L)}-P_{e^{tz_n}(L)}x_n(t)P_{e^{tz_n}(L)}=(1-P_{e^{tz_n}(L)})x_n(t)P_{e^{tz_n}(L)}$$ and then since $P_{e^{tz_n}(L)}$ is a Lagrangian projector we have $$Jx_n(t)=(J-JP_{e^{tz_n}(L)})x_n(t)P_{e^{tz_n}(L)}=P_{e^{tz_n}(L)}Jx_n(t)P_{e^{tz_n}(L)}.$$ Thus using the equality (\ref{znxn}) $$ Q(\gamma_n,\dot{\gamma}_n)=\Vert x_n(t)\Vert_2=\Vert Jx_n(t)\Vert_2=\Vert P_{e^{tz_n}(L)}Jx_n(t)P_{e^{tz_n}(L)}\Vert_2$$ $$=\Vert P_{e^{tz_n}(L)}Jz_n\vert_{e^{tz_n}(L)}\Vert_2 \leq \Vert z_n\Vert_2$$ for all $n,t$. Then is clear that $d_{\mathcal{Q}}(L_n,L)\leq L_{\mathcal{Q}}(\gamma_n)\rightarrow 0$.
\end{proof}
Now we are in a position to obtain our main result.
\begin{teo} The metric space $(\mathcal{O}_{L_0},d_{\mathcal{Q}})$ is complete.
\end{teo}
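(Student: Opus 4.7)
The plan is to reduce to the completeness result already established for the ambient metric, and then upgrade the mode of convergence using the preceding lemma. The strategy rests on three ingredients that are all in hand: Proposition \ref{desdadq} gives $d_{\mathcal{A}}\leq d_{\mathcal{Q}}$, so $d_{\mathcal{Q}}$-Cauchy sequences are automatically $d_{\mathcal{A}}$-Cauchy; the proof (not merely the statement) of Theorem \ref{compda} constructs a limit that is obtained as a manifold-topology limit in $\mathcal{O}_{L_0}$; and the preceding lemma says that manifold-topology convergence implies $d_{\mathcal{Q}}$-convergence.

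Concretely, given a $d_{\mathcal{Q}}$-Cauchy sequence $(L_n)\subset\mathcal{O}_{L_0}$, I would first invoke Proposition \ref{desdadq} to conclude that $(L_n)$ is $d_{\mathcal{A}}$-Cauchy, and then re-run the construction inside the proof of Theorem \ref{compda}. That is: fix $\varepsilon>0$ and $n_0$ with $d_{\mathcal{A}}(L_{n_0},L_m)<\varepsilon$ for $m\geq n_0$; choose curves $\gamma_{n_0,m}$ joining $L_{n_0}$ to $L_m$ of ambient length within $\varepsilon$ of $d_{\mathcal{A}}(L_{n_0},L_m)$; lift each isometrically via Lemma \ref{levantadas} to $\phi_m:[0,1]\to{\rm Sp}_2(\mathcal{H})$ with $\phi_m(0)=1$; and set $g_m=\phi_m(1)$. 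The bound $d_{\mathcal{R}}(1,g_m)\leq L_{\mathcal{R}}(\phi_m)=L_{\mathcal{A}}(\gamma_{n_0,m})\leq 2\varepsilon$ makes $(g_m)$ a $d_{\mathcal{R}}$-Cauchy sequence in ${\rm Sp}_2(\mathcal{H})$; by Proposition \ref{dldr} the sequence $(g_m^{-1})$ is $d_{\mathcal{L}}$-Cauchy, and Lemma 7.1 of \cite{Manuel} yields $g_m^{-1}\to x$ in $\|\cdot\|_2$ for some $x\in{\rm Sp}_2(\mathcal{H})$. Continuity of $\pi_{L_{n_0}}:({\rm Sp}_2(\mathcal{H}),\|\cdot\|_2)\to\mathcal{O}_{L_0}$ then delivers the manifold-topology convergence $L_m=g_m(L_{n_0})\to x^{-1}(L_{n_0})$, and the preceding lemma transforms this into $d_{\mathcal{Q}}(L_m,x^{-1}(L_{n_0}))\to 0$.

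There is no real analytic obstacle here, since every delicate ingredient (existence of isometric lifts, promotion of $d_{\mathcal{R}}$-Cauchyness to $\|\cdot\|_2$-Cauchyness, smoothness of local sections of $\pi$) has already been discharged in earlier results; the proof is essentially a short concatenation of Proposition \ref{desdadq}, the construction inside Theorem \ref{compda}, and the preceding lemma. The only small point worth highlighting is that invoking Theorem \ref{compda} as a black box is not quite enough: we really do need the manifold-topology limit that its proof produces, because it is that limit — not merely the $d_{\mathcal{A}}$-limit — on which the preceding lemma feeds to yield $d_{\mathcal{Q}}$-convergence. An alternative would be to prove separately that $d_{\mathcal{A}}$-convergence on $\mathcal{O}_{L_0}$ implies manifold convergence, after which the reduction becomes a one-liner, but re-running the construction is equally painless and avoids introducing an extra statement.
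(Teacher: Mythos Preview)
Your proposal is correct and follows exactly the same route as the paper: use Proposition \ref{desdadq} to pass from $d_{\mathcal{Q}}$-Cauchy to $d_{\mathcal{A}}$-Cauchy, re-run the construction inside the proof of Theorem \ref{compda} to obtain manifold-topology convergence $L_n \stackrel{\mathcal{O}_{L_0}}\longrightarrow \pi(x^{-1})$, and then apply the preceding lemma to upgrade this to $d_{\mathcal{Q}}$-convergence. Your observation that one must invoke the \emph{construction} of Theorem \ref{compda} rather than its mere statement is precisely the point the paper is making when it says ``repeat the procedure''.
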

\begin{proof} Let $(L_n)$ be a $d_{\mathcal{Q}}$-cauchy sequence, then by Proposition \ref{desdadq} $(L_n)$ is a $d_{\mathcal{A}}$-cauchy sequence. If we repeat the procedure that we did in Theorem \ref{compda} we have that $L_n \stackrel{\mathcal{O}_{L_0}}\longrightarrow \pi(x^{-1})$ for some $x\in {\rm Sp}_2(\mathcal{H})$ and then using the above lemma we have $d_{\mathcal{Q}}(L_n,\pi(x^{-1}))\rightarrow 0$. 

\end{proof}


\paragraph{Acknowledgements}
I want to thank Prof. E. Andruchow and Prof. G. Larotonda for their suggestions and support.

\bigskip
{\footnotesize Manuel L\'opez Galv\'an.\\
Instituto de Ciencias, Universidad Nacional de General Sarmiento.\\
JM Guti\'errez 1150 (1613) Los Polvorines. Buenos Aires, Argentina.\\
e-mail: mlopezgalvan@hotmail.com}


\begin{thebibliography}{XXXX}

\bibitem{Andruchow1} E. Andruchow and G. Larotonda. Lagrangian Grassmannian in infinite dimension. J. Geom. Phys. 59 (2009), no. 3, 306-320.

\bibitem{Andruchow2} E. Andruchow, G. Larotonda, L. Recht. Finsler geometry and actions of the p-Schatten unitary groups. Trans. Amer. Math. Soc. 362 (2010), 319-344.

\bibitem{Arnold} V.I. Arnold. On a characteristic class entering into conditions of quantization, Funkcional. Anal. i Prilo$\check{z}$en. 1 (1967) 1-14 (in Russian).

\bibitem{Atkin} C.J. Atkin. The Hopf-Rinow theorem is false in infinite dimensions. Bull. London Math. Soc. 7 (1975), 261-266.

\bibitem{Beltita} D. Belti\c{t}$\check{a}$. Smooth homogeneous structures in operator theory. Chapman and Hall/CRC. Monographs and Surveys in Pure and Applied Mathematics, 137. Chapman and Hall/CRC, Boca Raton, FL, 2006.

\bibitem{Piccione1} L. Biliotti, R. Exel, P. Piccione and D. V. Tausk. On The Singularities of the exponential map in
infinite dimensional Riemannian Manifolds. Math. Ann. 336 (2) (2006) 247-267.

\bibitem{Furutani} K. Furutani. Fredholm-Lagrangian-Grassmannian and the Maslov index. J. Geom. Phys. 51 (3) (2004) 269-331. 

\bibitem{Harpe} Pierre de la Harpe. Classical Banach-Lie Algebras and Banach-Lie Groups of Operators in Hilbert Space. Springer-Verlag.
Berlin. Heidelberg. NewYork 1972.

\bibitem{Lang} S. Lang. Differentiable and Riemannian manifolds. Third edition. Graduate Texts in Mathematics, 160. Springer-Verlag, New York, 1995.

\bibitem{Manuel} M. L\'opez Galv\'an. Riemannian metrics on an infinite dimensional Symplectic group. Journal of Mathematical Analysis and Applications (2015), in press. doi:10.1016/j.jmaa.2015.03.051.

\bibitem{McAlpin} J. McAlpin. Infinite dimensional manifolds and Morse theory. Thesis, Columbia University, 1965.

\bibitem{Piccione2} P. Piccione and D. Victor Tausk. A Student's Guide to Symplectic Spaces, Grassmannians and Maslov Index. Publica\c{c}\~oes Matem\'aticas do IMPA. Rio de Janeiro, Instituto Nacional de Matem\'atica Pura e Aplicada (IMPA). xiv, 301 p. (2008).
\end{thebibliography}
\end{document}